\documentclass[preprint,12pt]{elsarticle}
\usepackage[top=3cm, bottom=3cm, left=2.2cm, right=2.2cm]{geometry}
\usepackage{amssymb}    % 数学符号
\usepackage{amsmath}    % 基础公式环境
\usepackage{amsthm}     % 定理、引理、定义环境
\usepackage{mathtools}  % 复杂公式扩展（分段函数、积分等）
\usepackage{cases}      % 优化分段函数排版
\usepackage{hyperref}   % 可点击交叉引用
\usepackage{multirow}  % 用于单元格跨行合并
\usepackage{array}     % 增强表格兼容性，避免语法错误
\usepackage{makecell}  % 单元格内换行核心包（新增）
\usepackage{ragged2e}  % 左右对齐+自动换行优化（必加载）
\usepackage{adjustbox}   % 核心：自动缩放表格到页面宽度（不拉伸文字）

\usepackage[table]{xcolor}

\newtheorem{theorem}{Theorem}[section]    % 定理：1.1, 2.1...
\newtheorem{lemma}[theorem]{Lemma}        % 引理：与定理共享编号
\newtheorem{definition}[theorem]{Definition}  % 定义：与定理共享编号
\newtheorem{proposition}[theorem]{Proposition} % 命题：与定理共享编号
\newtheorem{remark}[theorem]{Remark} 
\newtheorem{corollary}[theorem]{Corollary}
\newcommand{\R}{{\mathbb R}}

\begin{document}

\begin{frontmatter}

%% 标题（填写你的论文标题）
\title{Sharp threshold for a one-dimensional thin film equation in the supercritical case}

%% 作者（多个作者用 \and 分隔）
\author{Shen Bian \corref{cor1}}
\cortext[cor1]{Corresponding author. E-mail: bianshen66@163.com} % 通讯作者

%% 作者单位（多个单位用 label 区分，按需添加）
\affiliation{organization={Department of Mathematics, Beijing University of Chemical Technology},
            addressline={No. 15 East North Third Ring Road, Chaoyang District},
            city={Beijing},
            postcode={100029},
            country={China}}

%% 摘要（必填，控制在 150-200 词，概括研究目的、方法、结果）

\begin{abstract}
We study a one-dimensional thin film equation combining competitive effects of aggregation and repulsion, where repulsion is modeled by fourth-order diffusion and aggregation by backward second-order degenerate diffusion with exponent $m>0$. Under natural regularity constraints, we prove that for every $m>0$, there exists a unique (up to the mass-critical case $m=3$) nonnegative, radially decreasing steady state $U_*$ which coincides with the extremal function of the sharp Sz.-Nagy inequality and is simultaneously the global minimizer of the free energy. Using this variational characterization in the supercritical regime $3<m<\infty$, we show that finite-time blow-up occurs for all initial data whose initial free energy lies below the positive threshold $F(U_*)$, provided the $L^{m+1}$-norm of the initial datum exceeds that of $U_*$. Conversely, if the $L^{m+1}$-norm is below that of $U_*$, the solution exists globally and its second moment diverges as $t\to\infty$. This sharp criterion significantly extends the previously known blow-up condition requiring negative free energy to a much wider class of initial data (see \cite{BP00}). Our results identify the steady state $U_*$ as the critical pivot linking variational structure to dynamical behavior, and provide a constructive method to determine blow-up versus global existence via an explicit $L^{m+1}$-norm comparison.
\end{abstract}

\begin{keyword}
Degenerate diffusion \sep Steady-state solutions \sep Global existence \sep Blow-up criterion \sep Functionals
\end{keyword}
% 35K65 \sep 34K21 \sep 35A01 \sep 35B44 \sep 35B38
\end{frontmatter}

%% 正文开始

\section{Introduction}
The fourth-order thin film equation is a central model in lubrication theory, describing the evolution of a thin viscous film on a solid substrate. It takes the form
\begin{align}\label{1}
\frac{\partial u}{\partial t}=-\frac{\partial}{\partial x} \left( u^n u_{xxx}\right)_x-\frac{\partial}{\partial x}\left(u^m u_x \right)_x,
\end{align}
where $u(x,t)$ denotes the local film thickness. The exponent $n>0$ is the mobility exponent, which encodes the boundary condition at the liquid-solid interface. Typical values are $n=3$ for the no-slip condition and $n=1$ for the perfect-slip (no-shear) condition characteristic of Hele-Shaw flows 
\cite{Tom1,Tom23,Tom25,Tom36}. The exponent $m$ arises from the disjoining pressure or a thickness-dependent body force. The case $0<m<3$ corresponds to non-classical Van der Waals forces or power-law molecular interactions \cite{Tom62,Tom65,Tom67}. The value $m=3$ recovers the classical Van der Waals force, which also appears together with $n=3$ in the description of fluid droplets hanging from a ceiling \cite{Tom26}. $m>3$ describes faster-decaying forces that localize the destabilizing effect to extremely thin films.                                                                                                                                                                                                                                                                                                                                                    

In this work, we focus on equation \eqref{1} with $n=1, m>0$, and consider nonnegative solutions of the following Cauchy problem
\begin{align}\label{2}
\left\{
  \begin{array}{ll}
 u_t=- (u u_{xxx})_x - (u (u^m)_x)_x, & x \in \R, ~t>0, \\
u(x,0)=u_0(x) \ge 0, & x \in \R.
\end{array}
\right.
\end{align}
Throughout this paper, initial data will be assumed to be
\begin{align}\label{initial}
u_0 \in L^1(\R) \cap H^1(\R),\qquad \int_{\R} x^2 u_0(x) dx< \infty. 
\end{align}
A fundamental property of the non-negative solutions to \eqref{2} is the formal conservation of the total mass:
\begin{align}\label{mass}
M:=\int_{\R} u_0(x) dx=\int_{\R} u(x,t) dx,\quad t \ge 0.
\end{align}
Equation \eqref{2} is governed by a fundamental competition between two nonlinear effects: the fourth-order term enforces stabilization and repulsion, whereas the backward second-order diffusion term drives destabilizing aggregation. This is well represented by the free energy
\begin{align}
F(u)=\frac{1}{2}\int_{\R} \left(u_x\right)^2 dx-\frac{1}{m+1} \int_{\R} u^{m+1} dx
\end{align}
satisfying the dissipation principle
\begin{align}
\frac{d}{dt}F(u(\cdot,t))=-\int_{\R} u \left((u_{xx}+u^m)_x\right)^2dx \le 0,
\end{align}
which can be derived by multiplying \eqref{2} by $\mu=-u_{xx}-u^m$. The main property of this energy is that it consists of two terms: the gradient energy $\frac{1}{2} \int_{\R} \left(u_x\right)^2 dx$ and the potential energy $-\frac{1}{m+1} \int_{\R} u^{m+1} dx$. 

Let us mention that \eqref{2} possesses a scaling invariance. If $u(x,t)$ is a solution to \eqref{2}, then the scaled function $u_\lambda(x,t)=\lambda^{\frac{2}{m-1}}u\left(\lambda x,\lambda^{\frac{4m-2}{m-1}} t\right)$ is also a solution to \eqref{2}. For the critical case $m=3$, the scaling reduces to the mass-invariant scaling $u_\lambda=\lambda u(\lambda x,\lambda^{5}t)$, under which the second-order aggregative term $\lambda^{m+3}  (u_{\lambda} (u_\lambda^m)_x)_x$ and the fourth-order repulsive term $\lambda^{6} (u_\lambda (u_\lambda)_{xxx})_x$ balance. For the subcritical case $0<m<3$, aggregation dominates repulsion for small $\lambda$, which prevents spreading, while for large $\lambda$, repulsion dominates aggregation, thus precluding blow-up. On the contrary, for the supercritical case $m>3$, aggregation dominates repulsion for large $\lambda$, and finite-time blow-up may occur. For small $\lambda$, repulsion dominates aggregation, allowing infinite-time spreading. This type of competition between aggregation and repulsion is a common feature in many nonlinear models, including Hele-Shaw flow, stellar collapse, chemotaxis models, and the nonlinear Schrödinger equation (see \cite{BL13,BCL09,BCKS99,B02,C03,C67,F01,KS70,M89,P07,SS99,W83}). In these models, the precise balance of the competing mechanisms critically governs the dynamics of solutions, ultimately determining the dichotomy between global existence and finite-time blow-up. 

For the subcritical case $0<m<3$, this is the repulsion-dominated regime. Global existence of solutions to \eqref{2} for any initial data was proved in \cite{BP98}. For the critical case $m=3$, this is the fair competition regime where repulsion and aggregation balance. \cite{WBB04} identified a critical mass $M_c$ and studied infinite-time self-similar spreading below the critical mass and finite-time blow-up self-similar solutions above the critical mass. Further analysis of blowing-up symmetric self-similar solutions with zero contact angle has been done in \cite{Jose60}. Both expanding and blowing-up self-similar solutions were also explored in \cite{Jose37,Jose38,Jose59}. Global existence of non-negative weak solutions was proved in \cite{LW17DC} with initial mass less than $M_c$.

For the supercritical case $m>3$, this is the aggregation-dominated regime. It is known from \cite{BP00} that finite-time blow-up occurs for initial data with negative free energy. However, to the best of our knowledge, little is known about the global existence of solutions. Our main goal in this paper is to establish a sharp threshold that distinguishes global existence from finite-time blow-up for solutions to \eqref{2}. To this end, we first present two theorems concerning stationary solutions, which are essential for the analysis. Based on these results, we will then show that finite-time blow-up actually occurs for all initial data whose initial free energy is below $F(U_*)$ with $F(U_*)>0$. This significantly extends the previously known blow-up criterion originally requiring negative free energy to a much wider class of initial data (see Theorem \ref{globalblow} below). The main tool for the analysis of \eqref{2} is a special case of the Sz.-Nagy inequality \cite{Sz.-Nagy41}.
\begin{lemma}[Sz.-Nagy inequality \cite{Sz.-Nagy41}]
Let the nonnegative function $f \in L^1(\R)$ and $f_x \in L^2(\R)$. Then $f \in L^{m+1}(\R)$ for all $m>0$, and we have the estimate
\begin{align}\label{Nagy}
\|f\|_{m+1}^{\frac{3(m+1)}{m}} \le C_* \|f\|_{1}^{\frac{m+3}{m}} \|f_x\|_2^2,
\end{align}
where the sharp constant is given by 
\begin{align}
C_*=\left( \frac{3}{4} Q\left( \frac{3}{2m},\frac{1}{2}  \right) \right)^2,\quad Q(x,y)=\frac{(x+y)^{-(x+y)\Gamma(x+y)}}{x^{-x}y^{-y}\Gamma(x)\Gamma(y)}.
\end{align}
In particular, for $m=3$, the sharp constant $C_*=\frac{9}{4\pi^2}$.
\end{lemma}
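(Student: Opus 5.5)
The plan is to reduce \eqref{Nagy} to a one-dimensional variational problem and identify the extremal explicitly.

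\textbf{Step 1: existence of the inequality (non-sharp).} For nonnegative $f$ with $f\in L^1$ and $f_x\in L^2$, the function $f$ is absolutely continuous. From
\[
f(x)^2 \le 2\int_{\R}|f f_x|\,dx \le 2\|f\|_\infty^{1/2}\|f\|_1^{1/2}\|f_x\|_2
\]
we get $\|f\|_\infty^{3/2}\le 2\|f\|_1^{1/2}\|f_x\|_2$. Interpolating, $\|f\|_{m+1}^{m+1}\le \|f\|_\infty^{m}\|f\|_1$, which gives
\[
\|f\|_{m+1}^{\frac{3(m+1)}{m}}\le \|f\|_\infty^{3}\|f\|_1^{3/m}\le 4\|f\|_1^{1+\frac{3}{m}}\|f_x\|_2^2 .
\]
The exponents match \eqref{Nagy}, since $1+\frac3m=\frac{m+3}{m}$. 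This shows $f\in L^{m+1}$ and that the best constant $C_*$ is finite.

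\textbf{Step 2: reduction to symmetric decreasing functions.} By Pólya--Szegő, the symmetric decreasing rearrangement $f^*$ preserves the $L^1$ and $L^{m+1}$ norms and does not increase $\|f_x\|_2$. So it suffices to maximize the Weinstein-type functional
\[
J(f)=\frac{\|f\|_{m+1}^{3(m+1)/m}}{\|f\|_1^{(m+3)/m}\|f_x\|_2^2}
\]
over radially decreasing $f$. This functional is invariant under $f\mapsto a f(bx)$, so we normalize $\|f\|_1=\|f\|_{m+1}=1$.

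\textbf{Step 3: existence of a maximizer.} Take a maximizing sequence of normalized symmetric decreasing functions. Step 1 bounds $\|f_x\|_2$ from below by a constant depending only on $C_*$, and $\|f_x\|_2$ is bounded above along the sequence because $J(f_n)\to C_*>0$. A decreasing function with unit mass satisfies $f(x)\le 1/(2|x|)$. Together with the $H^1$ bound, Helly's theorem and local compactness give a subsequence converging locally uniformly. The $L^{m+1}$ norm passes to the limit by dominated tails: $\int_{|x|>R} f^{m+1}\le C R^{-m}$. The $L^1$ and $\|f_x\|_2$ norms are lower semicontinuous. Hence the limit is a maximizer, after rescaling to restore normalization.

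\textbf{Step 4: Euler--Lagrange equation and explicit solution; this is the main obstacle.} The maximizer satisfies, on its positivity set $(-R,R)$ and after rescaling,
\[
-u_{xx}=u^m-\lambda\quad\text{on } \{u>0\},
\]
with $u$ compactly supported, since the $L^1$ constraint acts as an obstacle and $u\ge0$. At the free boundary one has the conditions $u=u_x=0$. Multiplying by $u_x$ gives the first integral
\[
\tfrac12u_x^2=\lambda u-\tfrac{u^{m+1}}{m+1}.
\]
The solution is then determined by the quadrature
\[
x=\int_u^{u(0)}\frac{ds}{\sqrt{2\lambda s-\frac{2}{m+1}s^{m+1}}}.
\]
The quantities $\|u\|_1$, $\|u\|_{m+1}^{m+1}$ and $\|u_x\|_2^2$ all reduce, via the substitution $s=u(0)\tau^{1/m}$, to Beta integrals of the form $B(\cdot,\frac12)$. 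Combining these with the Pohozaev-type identities obtained by testing the equation against $u$ and against $xu_x$ yields $J(u)$ in closed form. The delicate part is the Gamma-function bookkeeping needed to match $C_*=\left(\tfrac34Q\left(\tfrac{3}{2m},\tfrac12\right)\right)^2$.

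\textbf{Step 5: check $m=3$.} In this case the profile is explicit: $u=A\cos^2$-type on its support. Direct evaluation of the three norms should give $C_*=9/(4\pi^2)$, which serves as a sanity check on the Gamma computation. Uniqueness of the extremal up to scaling follows from uniqueness for the ODE, since the first integral determines $u$ once $\lambda$ and $u(0)$ are fixed.
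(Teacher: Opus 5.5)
The paper does not prove this lemma. It is quoted from Sz.-Nagy (1941), and the description of the extremals is likewise quoted later as Lemma~\ref{prop3}. Your proposal is therefore a genuinely different, self-contained route:
\begin{itemize}
\item a crude bound $\|f\|_\infty^3\le 4\|f\|_1\|f_x\|_2^2$ to show $C_*$ is finite;
\item P\'olya--Szeg\H{o} to reduce to symmetric decreasing functions;
\item compactness to produce an extremal;
\item Euler--Lagrange plus quadrature to evaluate it.
\end{itemize}
Steps 1--3 are sound. Any loss of mass in the limit only helps, since $\|f\|_1$ sits in the denominator of $J$. Beyond the citation, this route delivers what the paper later uses without proof: existence of an extremal, compact support, zero contact angle, and equation \eqref{VV2}.

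Three points need repair.

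\textbf{Boundary conditions in Step 4.} You assert the free-boundary conditions rather than derive them, and they fix the additive constant in your first integral. Unbounded support is excluded because $u_{xx}=\lambda-u^m\to\lambda>0$ would force $u_x\to\infty$. For the slope, write the Euler--Lagrange equation with its actual multipliers,
\[
\frac{2}{\|u_x\|_2^2}\,u_{xx}+\frac{3(m+1)}{m\|u\|_{m+1}^{m+1}}\,u^m-\frac{m+3}{m\|u\|_1}=0\quad\text{on }(-R,R),
\]
and test it against $xu_x$. The interior terms contribute $-1-\frac3m+\frac{m+3}{m}=0$, leaving $2R\,u_x(R^-)^2/\|u_x\|_2^2=0$; this is the mechanism of Proposition~\ref{prop2}(iii). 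Only then does $\frac12u_x^2=\lambda u-\frac{u^{m+1}}{m+1}$ hold with no extra constant.

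\textbf{The $m=3$ profile in Step 5.} The $m=3$ extremal is not of $\cos^2$ type; that is the case $m=1$. For $-u_{xx}=u^3-\lambda$ with $u(0)=A$ one gets $u_x^2=\frac12u(A^3-u^3)$, which is an elliptic profile. Your check still works through Beta integrals: $\|u\|_1=2\sqrt2\int_0^1 s^{1/2}(1-s^3)^{-1/2}\,ds=\frac{2\sqrt2\,\pi}{3}$, and the two Pohozaev identities give $J(u)=2/\|u\|_1^2=\frac{9}{4\pi^2}$.

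\textbf{The general constant.} Your bookkeeping will not reproduce the displayed formula, and you should not force it to. It yields
\[
C_*=\frac{3m}{4}\Bigl(\frac{3}{m+3}\Bigr)^{\frac{3-m}{m}}B\Bigl(\frac{3}{2m},\frac12\Bigr)^{-2}=\Bigl(\frac{m+3}{2}\,Q\Bigl(\frac{3}{2m},\frac12\Bigr)\Bigr)^2,
\]
where $Q(x,y)=\frac{x^xy^y\,\Gamma(x+y)}{(x+y)^{x+y}\,\Gamma(x)\Gamma(y)}$ is the paper's $Q$ with its misplaced exponent corrected. This gives $\frac{27}{16\pi^2}$ at $m=1$, the sharp one-dimensional Nash constant attained by $\cos^2$. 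At $m=3$ it gives $\frac{9}{4\pi^2}$. The printed prefactor $\frac34$ would instead give $\frac{9}{64\pi^2}$ at $m=3$, contradicting the stated special value. The stated $m=3$ value, the only one the paper uses quantitatively (through $M_c$), is correct.
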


Before presenting our main results, we first fix the notation to be used throughout the paper. For a given mass, we introduce the set
\begin{align}\label{YM}
\mathcal{Y}_M:=\{ u \in L_+^1(\R) \cap H^1(\R): \|u\|_1=M \}.
\end{align}
In the mass-critical case $m=3$, we define
\begin{align}\label{YMc}
\mathcal{Y}_{M_c}:=\{ u \in L_+^1(\R) \cap H^1(\R): \|u\|_1=M_c \},
\end{align}
where
\begin{align}\label{Mc}
M_c:=\left( \frac{2}{C_*} \right)^{1/2}, \quad C_*=\frac{9}{4 \pi^2}.
\end{align}
For $m \neq 3$, we define
\begin{align}\label{YMPstar}
\mathcal{Y}_{M,P_\ast}:=\{ u \in \mathcal{Y}_M: \|u\|_{m+1}=P_\ast \},
\end{align}
where
\begin{align}\label{Pstar}
P_\ast:=\left( \frac{3(m+1)}{2 m C_\ast M^{\frac{m+3}{m}}} \right)^{\frac{m}{(m+1)(m-3)}}.
\end{align}

To investigate the dynamical behavior of solutions to \eqref{2}, we require several properties of steady states. Steady states of the equation under consideration have already been studied in the literature. Positive periodic steady states in $C^3(\R)$ and nonnegative steady states with connected support and either zero or nonzero contact angles were investigated in \cite{Tom42,Tom43}. In the present work, the steady states relevant to our dynamical analysis are neither periodic nor of the equal-contact-angle type; instead, they belong to a different regularity class, namely $L^1 \cap H^1(\R)$. The critical dynamical threshold is closely related to the extremal functions of the Sz.-Nagy inequality. \textbf{Our first main result} establishes that these extremal functions are indeed steady states. Specifically, they are nonnegative, radially symmetric, and non-increasing. The precise statement is given below.

\begin{theorem}\label{Nagyoptimi}
Let $0<m<\infty$. Then the following statements hold:
\begin{itemize}
  \item[(i)] For $m \neq 3$, the Sz.-Nagy inequality \eqref{Nagy} admits a unique optimizer $U_*$ in $\mathcal{Y}_{M,P_\ast}$. This optimizer is the unique nonnegative, radially decreasing steady state of \eqref{2} with bounded support and zero contact angle.
  \item[(ii)] For $m=3$, the Sz.-Nagy inequality \eqref{Nagy} admits infinitely many optimizers in $\mathcal{Y}_{M_c}$. These optimizers are nonnegative, radially decreasing steady states of \eqref{2} and form a one-parameter family under mass-invariant scaling.
\end{itemize}
\end{theorem}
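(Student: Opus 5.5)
We work with the Weinstein-type functional
\[
J(f)=\frac{\|f\|_1^{\frac{m+3}{m}}\|f_x\|_2^2}{\|f\|_{m+1}^{\frac{3(m+1)}{m}}},\qquad f\in L^1_+(\R)\cap H^1(\R),\ f\not\equiv 0.
\]
By \eqref{Nagy}, $\inf J=1/C_*$, and optimizers of \eqref{Nagy} are exactly the minimizers of $J$. The functional $J$ is invariant under the two-parameter group $f\mapsto a f(bx)$. We use $a$ to fix the mass $\|f\|_1=M$, and then $b$ to fix $\|f\|_{m+1}$ to any prescribed value. The latter is possible when $m\neq 3$ because at fixed mass $\|a f(b\cdot)\|_{m+1}$ scales like a nontrivial power $b^{(m-3)/(m+1)}$ after the mass normalization. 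When $m=3$ the $L^{4}$-norm at fixed mass still varies with $b$, but $J$ reduces to a mass-invariant condition; this gives the one-parameter family in (ii).

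\textbf{Step 1: existence of an optimizer.} Take a minimizing sequence and replace each term by its symmetric decreasing rearrangement. This preserves the $L^1$ and $L^{m+1}$ norms and does not increase $\|f_x\|_2$ by the P\'olya--Szeg\H{o} inequality. Normalize by scaling so that $\|f_k\|_1=M$ and $\|(f_k)_x\|_2=1$. The $H^1$-bound together with monotonicity gives the pointwise bound $f_k(x)\le M/(2|x|)$, and Helly's theorem gives a subsequence converging pointwise and locally uniformly. Combining the $L^\infty$ bound from Gagliardo--Nirenberg with the decay, we get $L^{m+1}$-convergence by dominated convergence. Lower semicontinuity of $\|f_x\|_2$ and of $\|f\|_1$ (Fatou) then shows the limit is a minimizer. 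We must rule out a zero limit; this follows because $\|f_k\|_{m+1}$ is bounded below along the normalized minimizing sequence, since $J(f_k)\to 1/C_*$. Compactness of the $L^{m+1}$ norm was just established, so the limit has positive $L^{m+1}$ norm. Any loss of mass would strictly decrease $J$, contradicting minimality, so the mass is exactly $M$.

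\textbf{Step 2: Euler--Lagrange equation and identification with steady states.} Take variations $f+\varepsilon\varphi$ on the positivity set $\{U>0\}$, which is an interval since $U$ is symmetric decreasing. The first variation gives
\[
-U_{xx}-\lambda U^m=\mu \quad \text{on } \{U>0\},
\]
with constants $\lambda>0$ and $\mu$ expressed through the norms. After the scaling fixing $\|U\|_{m+1}=P_*$, the constant $\lambda$ becomes $1$. The value \eqref{Pstar} is designed so that equality in \eqref{Nagy} combined with the Pohozaev-type identity (multiply by $xU_x$ and by $U$ and integrate) forces $\lambda=1$. Hence $u_{xx}+u^m=$ const on the support, so the flux $u(u_{xx}+u^m)_x$ vanishes and $U$ is a steady state of \eqref{2}.

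Next, the constant $\mu$ has sign $-$, which we obtain from the energy identities. Multiplying the ODE by $U_x$ gives $\tfrac12U_x^2+\tfrac{1}{m+1}U^{m+1}+\mu U=E$. Since $U\in H^1\cap L^1$, one needs $U_x\to0$ where $U\to0$. So $E=0$, and the support is bounded because $\mu<0$ makes $U_x^2\sim 2|\mu|U$ near the edge, giving finite-time vanishing of $\sqrt U$. The contact angle is $U_x=0$ at the edge. Conversely, any nonnegative radially decreasing steady state with bounded support and zero contact angle satisfies the same first-order ODE with $E=0$. Its profile is then determined by quadrature from $(\mu, M)$, and mass and the Pohozaev relation pin down $\mu$ uniquely when $m\ne3$. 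This yields uniqueness of both the optimizer in $\mathcal{Y}_{M,P_*}$ and the steady state.

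\textbf{Step 3: the case $m=3$.} The Pohozaev relation forces $M=M_c$, and $\mu$ is free. The family $\lambda U(\lambda x)$ gives the one-parameter set of optimizers and steady states, which proves (ii).

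The main obstacle is Step 2's uniqueness. One has to show that the quadrature map from $\mu$ to the mass, combined with the constraint $\|U\|_{m+1}=P_*$, is injective; I would do this by explicit computation of the integrals as Beta functions. A second difficulty is justifying the Euler--Lagrange equation only on the support, given the constraint $f\ge0$.
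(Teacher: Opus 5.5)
Your route differs from the paper's in two legitimate ways. You prove existence of an extremal by rearrangement and compactness, whereas the paper takes the existence and form of all optimizers from Sz.-Nagy (Lemma~\ref{prop3}). You prove uniqueness by one-dimensional quadrature, whereas the paper cites Pucci--Serrin. The zero-contact-angle step, however, has a genuine gap.

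You conclude $E=0$ because $U\in H^1\cap L^1$ forces $U_x\to0$ where $U\to0$. That holds only at infinity. If $\{U>0\}$ were unbounded, then indeed $E=0$, and your $\sqrt{U}$ argument gives a contradiction, so the support is bounded. On a bounded support $(-L,L)$, though, $H^1$-regularity says nothing about $U_x(\pm L)$. For $E>0$ the first integral $\tfrac12U_x^2+\tfrac{1}{m+1}U^{m+1}+\mu U=E$ produces a bump that reaches zero with slope $\mp\sqrt{2E}$, and that bump lies in $L^1\cap H^1$. Since the Euler--Lagrange equation is only available on $\{U>0\}$, nothing you wrote excludes $E>0$, yet zero contact angle is part of assertion (i).

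You need optimality across the free boundary. The paper does this via Proposition~\ref{prop2}(iii). With $\lambda=1$, the multiplier formula from the first variation (equivalently, equality in \eqref{Nagy} with $\|U\|_{m+1}=P_\ast$) is exactly the boundary-free Pohozaev identity $\|U_x\|_2^2=\frac{2m}{3(m+1)}\|U\|_{m+1}^{m+1}$. Multiplying the equation by $xU_x$ and by $U$ on $(-L,L)$ gives the same identity plus the term $\tfrac12[xU_x^2]_{-L}^{L}=\tfrac{L}{2}\bigl(U_x(L^-)^2+U_x(-L^+)^2\bigr)$, which therefore vanishes. Alternatively, one-sided perturbations $U+\varepsilon\varphi$ with $\varphi\ge0$, $\varepsilon>0$ give $-U_{xx}-U^m-\mu\ge0$ in $\mathcal{D}'(\R)$. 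This excludes the negative atom $-|U_x(L^-)|\,\delta_L$ that a kink at $L$ would create in $-U_{xx}$.

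A second, smaller gap concerns uniqueness of the optimizer. Your Step 2 treats only a symmetric decreasing $U$ (you use that $\{U>0\}$ is an interval), so it gives uniqueness only among symmetric decreasing optimizers. Uniqueness in $\mathcal{Y}_{M,P_\ast}$, necessarily up to translation, requires that every optimizer be a translate of such a function. The paper gets this from Lemma~\ref{prop3}. To stay self-contained, apply the Euler--Lagrange equation (its multipliers are global) and the one-sided inequality above on each component of $\{f>0\}$. Every component is then the same bump $B$, and $N$ disjoint copies give $J=N^2J(B)$, forcing $N=1$.

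Two further remarks. First, the injectivity you flag as the main obstacle is immediate by scaling. With $E=0$ and unit coefficient, the bump of height $h$ is $h\,B(h^{(m-1)/2}x)$ for one fixed profile $B$, so its mass is $h^{(3-m)/2}\|B\|_1$. This is strictly monotone in $h$ for $m\neq3$ and independent of $h$ for $m=3$, where one checks $\|B\|_1=M_c$. That gives the uniqueness in (i) and the one-parameter family in (ii) without Beta functions or Pucci--Serrin, which is more elementary than the paper's citation.

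Second, at fixed mass $\|af(b\,\cdot)\|_{m+1}$ scales like $b^{m/(m+1)}$, not $b^{(m-3)/(m+1)}$, so the $L^{m+1}$-norm can be prescribed for every $m$. What distinguishes $m\neq3$ is that at fixed mass the Euler--Lagrange coefficient satisfies $\lambda\propto\|U\|_{m+1}^{-(m-3)(m+1)/m}$. Hence $\lambda=1$ selects $\|U\|_{m+1}=P_\ast$ when $m\neq3$, and instead forces $M=M_c$ when $m=3$.
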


We next turn to the global minimization of the free energy. Existing results in \cite{Tom44} show that positive periodic steady states act as energy-unstable saddle points, and that families of compactly supported steady states possess lower energy than their periodic counterparts. Building on this insight, the present work goes further by identifying the exact energy minimizers. In particular, we prove that the optimizers of the Sz.-Nagy inequality coincide with the global minimizers of the free energy. \textbf{Our second main result} characterizes the associated energy landscape as follows:

\begin{theorem}\label{fumini}
Let $0<m<\infty$. Then the following statements hold:
\begin{itemize}
\item[(i)] For $3<m<\infty$, $U_\ast$ is the unique global minimizer of $F(u)$ over $\mathcal{Y}_{M,P_\ast}$, and is also the unique global minimizer of $F(u)$ among all steady-state solutions to \eqref{2} with mass $M$. Moreover, we have
         \begin{align}
            \inf_{u \in \mathcal{Y}_{M,P_\ast}} F(u)=F(U_\ast)=\frac{m-3}{3(m+1)} P_\ast^{m+1}>0.
         \end{align}
      \item[(ii)] For $m=3$, any optimizer $u_c$ of the Sz.-Nagy inequality \eqref{Nagy} is a global minimizer of $F(u)$ over
      $\mathcal{Y}_{M_c}$ and satisfies $F(u_c)=0$.
      \item[(iii)] For $0<m<3$, $U_\ast$ is the unique global minimizer of $F(u)$ over $\mathcal{Y}_M$. Moreover, we have
             \begin{align}
                 \inf_{u \in \mathcal{Y}_M} F(u)=F(U_\ast)=\frac{m-3}{3(m+1)} P_\ast^{m+1}<0.
             \end{align}
\end{itemize}
\end{theorem}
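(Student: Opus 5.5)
The plan is to read every statement off the Sz.-Nagy inequality \eqref{Nagy} with the sharp constant $C_*$. Theorem \ref{Nagyoptimi} supplies the optimizers and identifies them as steady states. For $u\in\mathcal{Y}_M$ write $P=\|u\|_{m+1}$. Then \eqref{Nagy} gives
\begin{align*}
\frac12\|u_x\|_2^2\ \ge\ \frac{P^{\frac{3(m+1)}{m}}}{2C_*M^{\frac{m+3}{m}}},
\end{align*}
and hence
\begin{align*}
F(u)\ \ge\ f(P):=\frac{P^{\frac{3(m+1)}{m}}}{2C_*M^{\frac{m+3}{m}}}-\frac{P^{m+1}}{m+1},
\end{align*}
with equality exactly when $u$ is an optimizer of \eqref{Nagy}.

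\textbf{Part (i).} Fix $P=P_*$ in $f$. Definition \eqref{Pstar} rearranges to
\begin{align*}
\frac{1}{2C_*M^{\frac{m+3}{m}}}=\frac{m}{3(m+1)}\,P_*^{\,m+1-\frac{3(m+1)}{m}},
\end{align*}
so $f(P_*)=\frac{m}{3(m+1)}P_*^{m+1}-\frac{1}{m+1}P_*^{m+1}=\frac{m-3}{3(m+1)}P_*^{m+1}$. This is positive because $m>3$. By Theorem \ref{Nagyoptimi}(i), $U_*\in\mathcal{Y}_{M,P_*}$ attains equality in \eqref{Nagy} and is the unique such optimizer there. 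Therefore $\inf_{\mathcal{Y}_{M,P_*}}F=F(U_*)$, and uniqueness of the minimizer follows from the equality case.

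For the claim about steady states, I will derive a Pohozaev-type identity. A steady state with mass $M$ in $L^1\cap H^1$ satisfies $u(u_{xx}+u^m)_x=0$, so $u_{xx}+u^m=\lambda$ on its support. Multiplying by $u$ and integrating gives $\|u_x\|_2^2=\|u\|_{m+1}^{m+1}-\lambda M$. Multiplying by $xu_x$, using the zero-contact-angle condition and integrating, gives $-\tfrac12\|u_x\|_2^2\cdot(-1)$-type scaling relations:
\begin{align*}
\tfrac12\|u_x\|_2^2=-\tfrac{1}{m+1}\|u\|_{m+1}^{m+1}+\lambda M.
\end{align*}
Eliminating $\lambda$ should yield $\|u_x\|_2^2=\frac{2(m-1)}{3(m+1)}\|u\|_{m+1}^{m+1}$. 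Eliminating $\lambda$ is clean, but the exact coefficients need to be checked carefully. With this relation, $F$ on steady states becomes a function of $P$ alone.

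The remaining task is to show that admissible steady states of mass $M$ are forced to have $P\ge P_*$, with $F$ increasing in the relevant range. Alternatively, one can invoke the uniqueness part of Theorem \ref{Nagyoptimi}(i): the radially decreasing steady state is $U_*$. For general (possibly non-symmetric, multi-bump) steady states, the plan is to combine the Pohozaev relation with \eqref{Nagy} to get $P\ge P_*$. The energy then satisfies $F=cP^{m+1}$ with $c=\frac{1}{m+1}\cdot\frac{m-3}{3}>0$ after simplification, so $F\ge F(U_*)$ with equality only at $P=P_*$. Equality at $P=P_*$ forces equality in \eqref{Nagy}, hence $u=U_*$.

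\textbf{Parts (ii) and (iii).} For $m=3$ the two exponents in $f$ coincide, and $f(P)=P^4\bigl(\frac{1}{2C_*M_c^2}-\frac14\bigr)=0$ by \eqref{Mc}. So $F\ge0$ on $\mathcal{Y}_{M_c}$, with equality exactly at the optimizers from Theorem \ref{Nagyoptimi}(ii). For $0<m<3$ we have $\frac{3(m+1)}{m}>m+1$, so $f$ is coercive in $P$. A derivative computation shows its unique critical point is $P_*$, and this point is the global minimum, with $f(P_*)=\frac{m-3}{3(m+1)}P_*^{m+1}<0$. Thus $F(u)\ge f(\|u\|_{m+1})\ge f(P_*)=F(U_*)$. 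Equality forces both $P=P_*$ and equality in \eqref{Nagy}, hence $u=U_*$.

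The main obstacle is the steady-state comparison in (i). It requires a rigorous Pohozaev identity for weak steady states that may have several support components with possibly different Lagrange constants $\lambda_i$. The plan is to apply the identity componentwise and sum, then use \eqref{Nagy} to convert the result into $P\ge P_*$.
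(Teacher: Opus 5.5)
Your proposal is correct and is essentially the paper's proof (Propositions \ref{picture}, \ref{picture1} and \ref{picture2}). The Sz.-Nagy lower bound $F(u)\ge f(\|u\|_{m+1})$ is evaluated at $P_\ast$ for (i), vanishes identically for $m=3$, and is minimized at $P_\ast$ for $0<m<3$. For the steady-state claim, the Pohozaev identity is combined with \eqref{Nagy} to force $\|w\|_{m+1}\ge P_\ast$. Your derivation of Pohozaev, using the multipliers $u$ and $xu_x$ on each component with zero contact angle, is the same computation as in Proposition \ref{prop2}(iii) run in the opposite direction. There the paper obtains Pohozaev from the scaling variation of $F$ and then deduces the boundary condition.

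One correction. Adding your two displayed identities gives $\frac32\|u_x\|_2^2=\frac{m}{m+1}\|u\|_{m+1}^{m+1}$, so the coefficient is $\frac{2m}{3(m+1)}$, not $\frac{2(m-1)}{3(m+1)}$. It is exactly this value that gives your constant $c=\frac{m-3}{3(m+1)}$. It is also what makes \eqref{Nagy} yield $\|w\|_{m+1}^{(m+1)(m-3)/m}\ge \frac{3(m+1)}{2mC_\ast M^{(m+3)/m}}=P_\ast^{(m+1)(m-3)/m}$.
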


Before proceeding, let us state the notion of weak solutions adopted throughout this paper.
\begin{definition}[Weak solution]\label{weakdefine}
Let $u_0$ be initial data satisfying \eqref{initial} and $T \in (0,\infty]$. A weak solution $u$ to \eqref{2} is a nonnegative function satisfying
\begin{align*}
& u \in L^\infty \bigl(0,T; L^1(\R)\cap H^1(\R) \bigr), \\
& u \in L^2 \bigl(0,T; H^2(\R) \bigr), \\
& u^{1/2} u_{xxx} \in L^2\bigl(0,T;L^2(\R)\bigr)
\end{align*}
such that for every $0<t<T$ and any $\psi \in C_0^\infty(\R)$,
 \begin{align}
 \int_{\R} \psi u(\cdot,t)\,dx- \int_{\R} \psi u_0(x)\,dx & =\int_0^t
 \int_{\R} \psi_x u u_{xxx} \,dx\,ds-\frac{m}{m+1} \int_0^t \int_{\R} \psi_{xx}\,u^{m+1}\,dx\,ds.  \label{weak}
 \end{align}
This weak solution is also a weak entropy solution satisfying
 \begin{align}
 F(u(\cdot,t))+\int_0^t \int_{\R} u \left| (u_{xx}+u^m)_x \right|^2 dxds \le F(u_0),\quad 0<t<T.
 \end{align}
\end{definition}

We are now ready to present \textbf{our third main result} concerning the dynamical behavior of solutions in the supercritical regime.
\begin{theorem}\label{globalblow}
For $3<m<\infty$, assume $F(u_0)<F(U_\ast)=\frac{m-3}{3(m+1)}P_*^{m+1}$.
 \begin{enumerate}
    \item[(i)] If $\|u_0\|_{m+1}>\|U_\ast\|_{m+1}$, then the solution blows up in finite time.
    \item[(ii)] If $\|u_0\|_{m+1}<\|U_\ast\|_{m+1}$, then the solution exists globally in time, and its second moment satisfies
    \begin{align*}
    \lim_{t \to \infty} \int_{\R} x^2 u(\cdot,t) dx=+\infty.
    \end{align*}
 \end{enumerate}
\end{theorem}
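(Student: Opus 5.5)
We follow the potential-well (Payne–Sattinger / Kenig–Merle) strategy, using the Sz.-Nagy inequality \eqref{Nagy} with mass conservation \eqref{mass} to bound $F$ from below by a function of $y=\|u\|_{m+1}^{m+1}$ alone. Indeed, \eqref{Nagy} gives $\|u_x\|_2^2\ge C_*^{-1}M^{-\frac{m+3}{m}}y^{\frac{3}{m}}$, hence
\begin{align*}
F(u)\ge f(y):=\frac{1}{2C_*M^{\frac{m+3}{m}}}\,y^{\frac{3}{m}}-\frac{1}{m+1}\,y .
\end{align*}
For $m>3$ the exponent $3/m<1$, so $f$ increases on $(0,y_*)$ and decreases on $(y_*,\infty)$. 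Its maximizer solves $\frac{3}{2mC_*M^{(m+3)/m}}y^{3/m-1}=\frac{1}{m+1}$, which by \eqref{Pstar} gives exactly $y_*=P_*^{m+1}$. Moreover $f(y_*)=\frac{m-3}{3(m+1)}P_*^{m+1}=F(U_*)$, consistent with Theorem \ref{fumini}(i), where $U_*$ attains equality in \eqref{Nagy}. Set $y(t)=\|u(t)\|_{m+1}^{m+1}$. Since $F(u(t))\le F(u_0)<f(y_*)$ by the entropy inequality, we have $f(y(t))<f(y_*)$ for all $t$, so $y(t)\neq y_*$. We will use continuity of $t\mapsto y(t)$, obtained from $u\in L^\infty(0,T;H^1)$ with weak continuity in $L^1$ and interpolation, so that $y(t)$ stays on the same side of $y_*$ as $y(0)$. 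Sharpening this, choose $\delta>0$ with $F(u_0)\le f(y_*)-\delta$. Then in case (i) $y(t)\ge y_1>y_*$ for all $t$, and in case (ii) $y(t)\le y_2<y_*$, where $y_1,y_2$ are the two roots of $f=F(u_0)$.

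For (ii), we get $y(t)<y_*$ and then $\frac12\|u_x\|_2^2=F(u)+\frac{y}{m+1}\le F(u_0)+\frac{y_*}{m+1}$. This gives a uniform $H^1$ bound, and with mass conservation it prevents blow-up; global existence then follows from the local theory and continuation. For the second moment we use the virial identity. Testing \eqref{weak} with $\psi=x^2$ (justified by truncation) gives
\begin{align*}
\frac{d}{dt}\int_{\R} x^2u\,dx=2\int_{\R} x\,u u_{xxx}\,dx-\frac{2m}{m+1}\int_{\R} u^{m+1}dx=3\|u_x\|_2^2-\frac{2m}{m+1}\,y ,
\end{align*}
since $\int xuu_{xxx}=-\int (u+xu_x)u_{xx}=\|u_x\|_2^2+\frac12\|u_x\|_2^2$. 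Writing $3\|u_x\|_2^2=6F(u)+\frac{6}{m+1}y$ turns this into
\begin{align*}
\frac{d}{dt}\int_{\R} x^2u\,dx=6F(u)-\frac{2(m-3)}{m+1}\,y .
\end{align*}
In case (ii) we instead use the Nagy lower bound $\|u_x\|_2^2\ge 2(f(y)+\frac{y}{m+1})$ directly. This gives $3\|u_x\|_2^2-\frac{2m}{m+1}y\ge \frac{3}{C_*M^{(m+3)/m}}y^{3/m}-\frac{2m}{m+1}y=\frac{2m}{m+1}y\bigl((y_*/y)^{1-3/m}-1\bigr)$, which is positive for $y<y_*$ but degenerates as $y\to0$. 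The plan is to bound the derivative below by a positive constant. To do so we will show that $\|u_x\|_2^2$ cannot tend to zero along a sequence too fast, using the energy inequality and $y\le y_2$. More precisely, $3\|u_x\|^2-\frac{2m}{m+1}y\ge c\|u_x\|_2^2$ because $y\le y_2$ forces $y\le\theta\cdot\frac{3(m+1)}{2m}\|u_x\|_2^2$ with $\theta<1$, by the Nagy relation evaluated below $y_*$. It then remains to show $\int_0^\infty\|u_x\|_2^2\,dt=\infty$. This step is the weakest: $\int_0^\infty\|u_x\|_2^2\,dt=\infty$ is not something we can prove yet, and we would argue it by contradiction. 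If the integral were finite, the moment would stay bounded. Combined with the fact that $\|u_x\|_2$ is small on a sequence of times, mass conservation with a bounded second moment contradicts the interpolation $M\lesssim \|u_x\|_2^{2/5}\bigl(\int x^2u\bigr)^{1/5}\cdot$const. This interpolation is obtained by splitting $\int u$ over $|x|<R$ and $|x|>R$ and using $\|u\|_\infty\le \|u\|_1^{1/3}\|u_x\|_2^{2/3}$-type bounds.

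For (i), assume global existence and use the virial identity: $\frac{d}{dt}\int x^2u\le 6F(u_0)-\frac{2(m-3)}{m+1}y(t)$. With $y(t)\ge y_1>y_*$ and $F(u_0)<\frac{m-3}{3(m+1)}y_*$, the right side is at most $\frac{2(m-3)}{m+1}(y_*-y_1)<0$. The nonnegative second moment would then decrease linearly, which is a contradiction, so $T<\infty$. This is the routine part; the main obstacle is justifying the virial identity and the continuity of $y(t)$ for weak entropy solutions, which we would handle by approximation as in \cite{BP00}.
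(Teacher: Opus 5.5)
Your potential-well dichotomy and your part (i) follow the paper's argument. That is Proposition \ref{belowabove}, which you parametrize by $y=\|u\|_{m+1}^{m+1}$ and the two roots of $f=F(u_0)$ instead of the paper's $\delta F(U_\ast)$ device. It is followed by the virial identity \eqref{m22t} with a uniformly negative right-hand side. Your remark that $t\mapsto\|u(t)\|_{m+1}$ must be continuous makes explicit a step the paper uses tacitly.

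The genuine difference is how you prove that the second moment diverges in (ii). The paper argues by contradiction along times $t_k$ with $m_2(t_k)$ bounded:
\begin{itemize}
\item it extracts a limit $u_\infty$, using tightness from the moment bound and strong convergence in $L^1$ and $L^{m+1}$;
\item it uses integrability of the dissipation to declare $u_\infty$ a steady state of mass $M$;
\item it then plays Proposition \ref{picture}(ii), $F(u_\infty)\ge F(U_\ast)$, against lower semicontinuity, $F(u_\infty)\le F(u_0)<F(U_\ast)$.
\end{itemize}
You instead apply the sharp Sz.-Nagy inequality a second time to make the virial identity coercive below the threshold, $\frac{d}{dt}m_2\ge 3\bigl(1-(y_2/y_*)^{1-3/m}\bigr)\|u_x\|_2^2$; this constant is correct. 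Combined with $\frac{d}{dt}m_2\le 3\|u_x\|_2^2$, it makes $m_2$ nondecreasing, and $m_2$ diverges if and only if $\int_0^\infty\|u_x\|_2^2\,dt=\infty$. A bounded moment is then excluded by $M\le 2R\|u\|_\infty+R^{-2}m_2$ together with $\|u\|_\infty\le CM^{1/3}\|u_x\|_2^{2/3}$, which force $\|u_x\|_2$ to stay bounded below.

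This works, and it is more elementary and more robust than the paper's route. It needs no compactness and no identification of an $\omega$-limit as a steady state. Those are the paper's delicate points: passing from $\int_{t_k}^\infty(\cdots)\,ds\to0$ to a stationary limit, and checking that $u_\infty$ satisfies (K1)--(K4) so that Proposition \ref{picture}(ii) applies. Your route also yields monotonicity of $m_2$ for free. In exchange, the paper's route ties spreading to the fact that $U_\ast$ is the lowest-energy steady state, which is the conceptual thread of the paper.

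One slip: your interpolation $M\lesssim\|u_x\|_2^{2/5}m_2^{1/5}$ is not invariant under $u\mapsto\lambda u(\lambda x)$. Optimizing in $R$ actually gives $M^{7/3}\le C\|u_x\|_2^{4/3}\int_{\R}x^2u\,dx$, which still produces the contradiction.
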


\begin{remark}
\begin{itemize}
\item[1.] Our results provide a sharp dichotomy criterion for global existence versus finite-time blow-up of solutions in the supercritical regime $3<m<\infty$. The main difficulty we address arises from the fact that, unlike the critical case, mass conservation alone does not provide sufficient scaling control. Our approach overcomes this issue by fully exploiting the variational structure of steady-state solutions and its intrinsic connection to the free energy functional.
\item[2.] We note that under $F(u_0)<F(U_\ast)$, the equality $\|u_0\|_{m+1}=\|U_\ast\|_{m+1}$ cannot hold; see Remark \ref{Fu0FUstar}. For $F(u_0)>F(U_\ast)$, two scenarios remain possible: either the second moment of the solution diverges, or its $L^{m+1}$-norm remains unbounded. If both quantities are bounded, the solution may converge to $U_\ast$. We discuss these issues further in Section \ref{FU0great}.
\end{itemize}
\end{remark}

\begin{remark}[Physical interpretation: potential well and threshold dynamics]
The above result reveals sharp threshold behavior governed by the potential-well structure of gradient-flow dynamics:
\begin{itemize}
    \item \textbf{Initial energy below the steady-state energy} $F(u_0) < F(U_\ast)$ implies the system has already ``crossed the ridge''. Its evolution proceeds only \textbf{downhill (energy decreasing)} and cannot overcome the energy barrier.
    \item \textbf{The initial $L^{m+1}$ norm} $\|u_0\|_{m+1}$ determines \textbf{which downhill path} the system follows.
    \begin{itemize}
        \item[(a)] If $\|u_0\|_{m+1} < \|U_\ast\|_{m+1}$, the system enters a \textbf{gentle valley inside the potential well}, where $\|u(\cdot,t)\|_{m+1}$ and $\|u_x\|_{2}$ remain uniformly bounded. Consequently, the free energy is bounded from below and decreases toward a limit $F_\infty<F(U_\ast)$. The second moment is necessarily unbounded, implying mass escapes to spatial infinity and the solution decays locally on every compact set. Whether the solution profile, after suitable translation, converges to a steady state with energy lower than $F(U_\ast)$ remains an open question; it may instead converge to a singular measure at infinity.
        \item[(b)] If $\|u_0\|_{m+1} > \|U_\ast\|_{m+1}$, the system slides into a \textbf{steep ravine outside the potential well}, where attractive forces dominate, gradients grow rapidly, and finite-time blow-up (rupture) occurs.
    \end{itemize}
\end{itemize}
\end{remark}

The contributions of this work are threefold: to characterize the variational structure of the free energy, to identify ground-state solutions as extremals of the Sz.-Nagy inequality, and to establish a sharp threshold separating global existence from finite-time blow-up.

Throughout this paper, we denote by $C$ a generic positive constant that may vary from line to line, and write $C=C(\cdot,\cdots,\cdot)$ to indicate a constant depending only on the parameters inside the parentheses.

\section{Existence criterion}\label{sec2}
In this section, we establish the local existence and blow-up criteria for bounded weak solutions to the nonlinear equation \eqref{2}. To this end, we first define the standard mollifier $J(x)\in C^\infty(\R)$ by 
\begin{align*}
J(x)=\left\{
       \begin{array}{ll}
      C e^{\frac{1}{|x|^2-1}}, & |x|<1, \\
      0, & |x| \ge 1,
       \end{array}
     \right.
\end{align*}
where the constant C is normalized so that $\int_{\R} J(x) dx=1$.This mollifier will be used in the subsequent compactness analysis and limiting procedure. Inspired by \cite{LW17DC}, we consider the regularized problem.
\begin{align}\label{regularized}
\left\{
  \begin{array}{ll}
    \partial_t u_\varepsilon+\partial_x \left( \sqrt{u_\varepsilon^2+\varepsilon^2} \left( (u_\varepsilon)_{xxx}+(u_\varepsilon^m)_x  \right) \right)=0, & x\in \R, t>0, \\
  u_\varepsilon(x,0)=u_{0\varepsilon}(x) \ge 0, & x \in \R,
  \end{array}
\right.
\end{align}
where $u_{0\varepsilon}(x)=J_\varepsilon * u_0(x)$ with $J_\varepsilon(x)=\frac{1}{\varepsilon} J\left( \frac{x}{\varepsilon} \right)$ such that 
\begin{align}
\left\{
  \begin{array}{ll}
    u_{0\varepsilon} \ge 0, \quad \|u_{0\varepsilon}\|_1=\|u_0\|_1, \\
    u_{0\varepsilon}(x) \to u_0(x)\text{ in } L^q(\R) \text{ for } 1 \le q<\infty, \text{ as } \varepsilon \to 0, \\
    \int_{\R} x^2 u_{0\varepsilon}(x) dx \to \int_{\R} x^2 u_{0}(x) dx, \text{ as } \varepsilon \to 0.
  \end{array}
\right.
\end{align}
The results in \cite[Proposition 2]{LW17DC} and \cite[Theorem 3]{LW17DC} assert that for any $0<t<T$, if 
\begin{align}\label{starstar}
\|u_\varepsilon(\cdot,t)\|_{H^1(\R)} \le C_0,
\end{align}
where $C_0$ is independent of $\varepsilon$, then there exists a subsequence $\varepsilon_n \to 0$ such that
\begin{align}
u_{\varepsilon_n} \rightharpoonup{*} u \text{ in } L^\infty \left( 0,T;H^1(\R) \right), \\
u_{\varepsilon_n} \rightharpoonup{} u \text{ in } L^2 \left( 0,T;H^2(\R) \right). 
\end{align}
Furthermore, $u$ is a non-negative weak entropy solution satisfying
\begin{align}
F(u(\cdot,t))+\int_0^t \int_{\R} u \left| u_{xxx}+(u^m)_x \right|^2 dxds \le F(u_0),\quad 0<t<T.
\end{align}
According to the above analysis, a weak entropy solution to \eqref{2} on $[0,T)$ exists when \eqref{starstar} is fulfilled. Therefore, we focus on establishing the $H^1-$bound which additionally provides a characterisation of the maximal existence time. 

\begin{proposition}[Local in time existence and blow-up criteria]\label{local}
Under assumption \eqref{initial} on the initial condition, there exist a maximal existence time $T_w \in (0,\infty]$ and a free energy solution $u$ to \eqref{2} on $[0,T_w).$ If $T_w<\infty,$ then
\begin{align}
\|u_x(\cdot,t)\|_{L^2(\R)} \to \infty \text{ as } t \to T_w.
\end{align}
\end{proposition}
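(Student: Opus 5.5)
The plan is to obtain an $\varepsilon$-uniform $H^1$ bound for the regularized problem \eqref{regularized} on a short time interval whose length depends only on $\|u_0\|_{H^1}$ and $M$. The compactness statement from \cite{LW17DC} quoted above then produces a weak entropy solution on that interval. Finally, a continuation argument defines $T_w$ and yields the blow-up alternative.

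\textbf{Step 1 (basic a priori bounds).} For the regularized solution, mass is conserved, $\|u_\varepsilon(\cdot,t)\|_1=\|u_0\|_1=M$. Nonnegativity is inherited in the limit as in \cite{LW17DC}. Multiplying \eqref{regularized} by $-(u_\varepsilon)_{xx}-u_\varepsilon^m$ gives the regularized energy identity
\begin{align*}
F(u_\varepsilon(t))+\int_0^t\int_{\R}\sqrt{u_\varepsilon^2+\varepsilon^2}\,\bigl|(u_\varepsilon)_{xxx}+(u_\varepsilon^m)_x\bigr|^2dxds=F(u_{0\varepsilon}),
\end{align*}
with $F(u_{0\varepsilon})\to F(u_0)$. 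Since $F$ alone does not control $\|u_x\|_2$ when $m>3$, we write
\begin{align*}
\tfrac12\|(u_\varepsilon)_x(t)\|_2^2 = F(u_\varepsilon(t))+\tfrac1{m+1}\|u_\varepsilon(t)\|_{m+1}^{m+1}\le F(u_{0\varepsilon})+\tfrac1{m+1}\|u_\varepsilon(t)\|_{m+1}^{m+1}.
\end{align*}
So the whole task reduces to controlling $\|u_\varepsilon\|_{m+1}$ on a short interval.

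\textbf{Step 2 (differential inequality for $\|u_\varepsilon\|_{m+1}^{m+1}$ or directly for $\|(u_\varepsilon)_x\|_2^2$).} I would compute $\frac{d}{dt}\int u_\varepsilon^{m+1}dx$. Integrating by parts, it equals $(m+1)m\int u_\varepsilon^{m-1}(u_\varepsilon)_x\sqrt{u_\varepsilon^2+\varepsilon^2}\,\bigl((u_\varepsilon)_{xxx}+(u_\varepsilon^m)_x\bigr)dx$. By Cauchy--Schwarz this is bounded by half the dissipation plus
\[
C\int\sqrt{u_\varepsilon^2+\varepsilon^2}\,u_\varepsilon^{2m-2}(u_\varepsilon)_x^2\,dx .
\]
The dissipation half is absorbed using the energy identity: integrate in time and add the result to the energy identity. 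The remaining term is estimated by $\|u_\varepsilon\|_\infty^{2m-1}\|(u_\varepsilon)_x\|_2^2$ (plus an $\varepsilon$ term). In one dimension, $\|u\|_\infty\le \|u\|_1^{1/3}\|u_x\|_2^{2/3}$ up to a constant, and the same Sz.-Nagy/Gagliardo--Nirenberg reasoning as \eqref{Nagy} bounds $\|u\|_{m+1}^{m+1}$ by a power of $\|u_x\|_2$ with $M$ fixed. Setting $y(t)=\|(u_\varepsilon)_x(t)\|_2^2$ (or $y=\|u_\varepsilon\|_{m+1}^{m+1}$), combining these gives a closed inequality of the form
\[
y(t)\le C(M,\|u_0\|_{H^1})+C\int_0^t \bigl(1+y(s)\bigr)^{p}\,ds
\]
for some $p>1$ depending on $m$, uniformly in $\varepsilon\le1$. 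A comparison/ODE argument then gives $y(t)\le 2C(M,\|u_0\|_{H^1})+1$ on $[0,T_0]$, where $T_0>0$ depends only on $M$ and $\|u_0\|_{H^1}$. This yields \eqref{starstar} on $[0,T_0]$. Mass conservation gives the $L^1$ part, and the $L^2$ part follows by interpolation of $L^1$ and $L^\infty$.

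\textbf{Step 3 (continuation).} Applying the previous step with initial time $t_0$ and data $u(t_0)$ extends the solution. The data $u(t_0)$ satisfy \eqref{initial}: the second moment stays finite by a standard moment computation. Define $T_w$ as the supremum of times up to which a free energy solution with locally bounded $H^1$ norm exists. Suppose $T_w<\infty$ but $\liminf_{t\to T_w}\|u_x(t)\|_2<\infty$. Then there are $t_n\to T_w$ with $\|u(t_n)\|_{H^1}\le K$, and the uniform existence time $T_0(M,K)$ lets us extend beyond $T_w$, a contradiction. Hence $\|u_x\|_2\to\infty$.

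The main obstacle is Step 2. One must close the estimate with the degenerate mobility $\sqrt{u^2+\varepsilon^2}$ and the $\varepsilon$-remainder terms, uniformly in $\varepsilon$. One must also justify that the solution restarted at $t_0$, obtained as a limit, still enjoys the same local time $T_0$. I would handle the latter by performing the continuation at the level of the regularized problems, where solutions are smooth, and passing to the limit only at the end.
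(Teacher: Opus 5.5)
Your proposal follows essentially the same route as the paper. Adding the energy identity to the evolution of $\frac{1}{m+1}\|u_\varepsilon\|_{m+1}^{m+1}$ is the same identity as testing \eqref{regularized} against $-(u_\varepsilon)_{xx}$. After Young's inequality, both arguments are left with $\frac12\int_{\R}\sqrt{u_\varepsilon^2+\varepsilon^2}\,|(u_\varepsilon^m)_x|^2dx\lesssim\|u_\varepsilon\|_\infty^{2m-1}\|(u_\varepsilon)_x\|_2^2$ (up to an $\varepsilon$-term). One-dimensional interpolation then gives a superlinear ODE inequality for $\|(u_\varepsilon)_x\|_2^2$, with an existence time depending only on $M$ and $\|u_0\|_{H^1}$. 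Your Step 3 is the standard restart argument that the paper takes from \cite[Theorem~2.4]{BCL09}.

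One caveat: drop your closing suggestion to continue at the regularized level. That would give a blow-up criterion for $\sup_\varepsilon\|u_\varepsilon(t)\|_{H^1}$, and weak lower semicontinuity does not transfer such blow-up to $\|u_x(t)\|_2$. Restarting directly from the limit $u(t_n)$, which satisfies \eqref{initial}, as your Step 3 already does, is the right way.
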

\begin{proof}
The main task in proving the local existence is to obtain the following $H^1$-estimates. By multiplying equation \eqref{regularized} by $-(u_\varepsilon)_{xx}$, we obtain
\begin{align*}
\frac{1}{2}\frac{d}{dt}\int_{\R} (u_\varepsilon)_x^2 dx & =-\int_{\R} (u_\varepsilon)_{xxx}^2 \sqrt{u_\varepsilon^2+\varepsilon^2}dx-\int_{\R} (u_\varepsilon)_{xxx} (u_\varepsilon^m)_x \sqrt{u_\varepsilon^2+\varepsilon^2}dx \\
& \le -\frac{1}{2} \int_{\R} (u_\varepsilon)_{xxx}^2 \sqrt{u_\varepsilon^2+\varepsilon^2}dx+\frac{1}{2} \int_{\R} (u_\varepsilon^m)_x^2 \sqrt{u_\varepsilon^2+\varepsilon^2} dx \\
& \le C \|u_\varepsilon\|_{L^\infty(\R)}^{2m-1} \int_{\R} (u_\varepsilon)_x^2 dx \\
& \le C \left( \int_{\R} (u_\varepsilon)_x^2 dx+K_1(M) \right)^{\frac{2m+1}{2}},
\end{align*}  
where $K_1(M)$ is a constant depending on $M$, and we have used the fact that the $H^1$ norm controls $L^\infty$ in one space dimension, that is
\begin{align}
\|u_\varepsilon\|_{L^\infty(\R)}^2 \le \|u_\varepsilon\|_2^2+\|(u_\varepsilon)_x\|_2^2,
\end{align} 
combining with 
\begin{align}
\|u_\varepsilon\|_2^2 \le \|u_\varepsilon\|_1 \|u_\varepsilon\|_{L^\infty(\R)} \le \frac{1}{2}\|u_\varepsilon\|_1^2+\frac{1}{2}\|u_\varepsilon\|_{L^\infty(\R)}^2.
\end{align}
Hence, we have
\begin{align}
\frac{d}{dt}\int_{\R} (u_\varepsilon)_x^2 dx \le \overline{C} \left( \int_{\R} (u_\varepsilon)_x^2 dx+K_1(M) \right)^{\frac{2m+1}{2}}.
\end{align}
Solving the above inequality we obtain
\begin{align}
\int_{\R} (u_\varepsilon)_x^2 dx+K_1(M) \le \frac{1}{\left( \left(\int_{\R} (u_\varepsilon)_x^2 dx+K_1(M)\right)^{\frac{1-2m}{2}}-\frac{2m-1}{2}\overline{C} t \right)^{\frac{2}{2m-1}}}.
\end{align}
This implies that there exists a $T_0=T_0\left(\|u_0\|_{H^1(\R)} \right)$ independent of $\varepsilon$ such that
\begin{align}\label{H1}
\|u_\varepsilon\|_{L^\infty(0,T_0;H^1(\R))} \le C.
\end{align}
Then, by a word-for-word translation of \cite[Lemma 2]{LW17DC}, we obtain 
\begin{align}\label{H2}
\|u_\varepsilon\|_{L^2(0,T_0;H^2(\R))} \le \tilde{C},
\end{align}
where $\tilde{C}$ is a constant depending only on $\|u_0\|_{L^1(\R)}$ and $\|u_0\|_{L^\infty(\R)}$. 

As a consequence, the a priori bounds \eqref{H1} and \eqref{H2} hold true uniformly in $\varepsilon$ and thus we can pass to the limit by the Lions-Aubin lemma, which provides time compactness. In the limit, we obtain the local existence of a weak solution $u$ to \eqref{2}. Furthermore, in light of the proof for the maximal existence time in \cite[Theorem 2.4]{BCL09}, we deduce that there is $T_w \in (0,\infty]$ such that either $T_w=\infty$ or $T_w<\infty$ and $\|u_x(\cdot,t)\|_{L^2(\R)} \to \infty$ as $t \to T_w$. This completes the proof. 
\end{proof}

\section{Proofs of Theorems \ref{Nagyoptimi} and \ref{fumini}}\label{sec3}

This section constructs the variational framework needed for analyzing global existence and finite-time blow-up in the following section. We first study steady-state solutions of \eqref{2} and their relation to the free energy functional. Subsection \ref{subsec1} presents basic properties of steady states as necessary preliminaries. 

Based on these preparations, we further analyze the Euler-Lagrange equation derived from the Sz.-Nagy inequality. We prove that the extremal functions of this inequality are steady-state solutions of \eqref{2}, which completes the proof of Theorem \ref{Nagyoptimi}. We then show that these extremal functions are global minimizers of the free energy under suitable constraints, which yields the proof of Theorem \ref{fumini}. Detailed proofs are given in Subsections \ref{subsec2} and \ref{subsec3}, respectively.

\subsection{Steady States}\label{subsec1}
This subsection is devoted to a systematic investigation of steady-state solutions to equation \eqref{2}. In \cite{Tom43}, Laugesen and Pugh considered positive steady states in $C^3(\R)$, as well as compactly supported touchdown steady states with regularity $C^1[c,d] \cap C^3(c,d)$. They proved that periodic positive steady states and touchdown steady states with equal contact angles satisfy
\begin{align}
U U_{xxx}+U (U^m)_x=0 \quad \text{in } \R.
\end{align}
In this work, we focus on solutions belonging to the function class \(L^1(\R)\cap H^1(\R)\). Such solutions are distinct from the periodic profiles and equal-contact-angle steady states investigated in previous literature. Instead, they correspond to critical points of the free energy under mass conservation, characterized by a constant chemical potential over their support and compact support consisting of finitely many intervals, with no additional constraints imposed on their contact angles.

Let us start by precisely defining steady states to \eqref{2}.
\begin{definition}
Given $U_s \in L_+^1(\R) \cap H^1(\R)$, we call it a steady state of \eqref{2} if $U_s (U_s^m)_x \in H^1_{loc}(\R)$, $(U_s)_{xxx} \in L_{loc}^1(\R)$, and it satisfies
\begin{align}\label{fivestar}
U_s (U_s)_{xxx}+U_s (U_s^m)_x=0 \quad \text{in } \R
\end{align}
in the sense of distributions. Throughout this subsection, we assume the following:
\begin{itemize}
\item[$(K1)$] $0<m<\infty$, $U_s \in L^{1}(\R) \cap H^1(\R) \cap C(\R)$ with $U_s \ge 0$ and $\int_{\R} U_s dx=M$.
\item[$(K2)$] Let $\Omega:=\{ x \mid U_s>0 \}$ and $U_s=0$ in $\R \setminus \Omega$.
\item[$(K3)$] If $\Omega$ is unbounded, then $U_s$ decays sufficiently fast at infinity.
\item[$(K4)$] If $\Omega$ is bounded, then $\partial \Omega \in C^1$ and $U_s \in C^1(\overline{\Omega})$.
\end{itemize}
\end{definition}

We now present equivalent characterizations of steady states and show that the chemical potential is constant on each connected component of the support.

\begin{proposition}\label{prop1}
Under assumptions $(K1)$--$(K4)$, let
\begin{align}
\mu_s=-(U_s)_{xx}-U_s^m.
\end{align}
Then the following statements are equivalent:
\begin{itemize}
\item[$(i)$] No dissipation: $\displaystyle\int_{\R} U_s \left| (\mu_s)_x  \right|^2 dx=0$.
\item[$(ii)$] $\Omega=\displaystyle \bigcup_{k=1}^N (a_k,b_k)$ with $1 \le N<\infty$ is a finite union of bounded intervals, and
   \begin{align}
   \mu_s=C_k \quad \text{for } x \in (a_k,b_k),
   \end{align}
where $C_k$ is a constant on each interval $(a_k,b_k)$.
\end{itemize}
\end{proposition}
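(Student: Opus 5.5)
The plan is to prove the two implications separately, working on the open set $\Omega$, which by $(K1)$ and continuity of $U_s$ is a countable union of disjoint open intervals.

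\emph{(ii)$\Rightarrow$(i).} On each $(a_k,b_k)$ we have $\mu_s=C_k$, so $(\mu_s)_x=0$ there. Outside $\Omega$ we have $U_s=0$. Hence the integrand $U_s|(\mu_s)_x|^2$ vanishes almost everywhere and the integral is zero. The only care needed is to interpret $U_s|(\mu_s)_x|^2$ through the distributional identity \eqref{fivestar}, since $U_s(\mu_s)_x=-(U_s(U_s)_{xxx}+U_s(U_s^m)_x)$ is the natural object. With this reading the implication is immediate.

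\emph{(i)$\Rightarrow$(ii).} Let $(a,b)$ be any connected component of $\Omega$. There $U_s>0$ is continuous, so on every compact subinterval $U_s$ is bounded below by a positive constant. Then $\int U_s|(\mu_s)_x|^2=0$ forces $(\mu_s)_x=0$ a.e. on that subinterval, so $\mu_s$ is constant on $(a,b)$, say $\mu_s=C$. This yields the ODE $U_{xx}+U^m=-C$ on $(a,b)$. Bootstrapping from $U\in H^1$ and continuity shows $U$ is smooth there, and it has the first integral
\[
\tfrac12 U_x^2+\tfrac{1}{m+1}U^{m+1}+CU=E .
\]

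\emph{Boundedness of components.} Suppose a component is unbounded, say $(a,\infty)$. Since $U\in L^1\cap H^1$ and $(K3)$ gives fast decay, we have $U,U_x\to0$ at infinity, and the first integral gives $E=0$. Then $U_x^2=-2CU-\tfrac{2}{m+1}U^{m+1}$, so $C<0$ is needed for $U>0$ near infinity. But then $U_x^2\approx 2|C|U$ for small $U$. Solving $U_x=-\sqrt{2|C|U}$ gives $\sqrt U$ decreasing linearly, so $U$ reaches zero at a finite point, contradicting unboundedness. Alternatively, $U_{xx}=-C-U^m\to|C|>0$ contradicts $U_x\to0$. The case $C\ge 0$ is excluded directly, since then $U_{xx}\le -U^m<0$, and a positive concave function on a half-line cannot tend to $0$ while remaining positive. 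So every component is bounded.

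\emph{Finiteness of $N$.} This is the main obstacle. On a bounded component $(a,b)$, $U$ vanishes at both endpoints and, by the phase-plane analysis of the first integral, is a single symmetric hump. The relation $\int_a^b U\,dx\le M$ together with $U\in H^1$ must be used to show that only finitely many components exist.

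Here is the plan for that step. By $(K4)$, $U\in C^1$ up to the boundary, so on a boundary component the contact slopes satisfy $U_x^2=2E$. A convexity argument for the ODE should give that $\max U$ on each hump is bounded below in terms of $E$ and $C$. The open issue is that $C_k,E_k$ could degenerate, so tiny humps cannot be excluded this way unless a uniform bound is available. I would try to get one from the assumption that $\partial\Omega\in C^1$ is a finite union of points: in one dimension the boundary of a bounded set of this regularity has no accumulation points, so $\Omega$ has finitely many components. If the regularity hypothesis is read this way, finiteness follows. Otherwise I would try to derive it from the $H^1$ bound, using that a hump of height $h$ and width $w$ costs roughly $h^2/w$ in $\|U_x\|_2^2$ and satisfies $w\sim h^{(1-m)/2}$ from the ODE scaling. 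I expect this fallback to be the delicate step.
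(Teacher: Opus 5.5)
\paragraph{Where the proposal falls short.} Your proof of (ii)$\Rightarrow$(i) is fine. So is your derivation that $\mu_s$ is constant on each component directly from (i); the paper instead reads this off from \eqref{fivestar}, which is not among the hypotheses of the equivalence. Your exclusion of unbounded components is also fine: it uses the same mechanism as the paper ($(U_s)_{xx}$ tending to a constant at infinity) but treats the case $C\ge 0$ properly.

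The genuine gap is the finiteness of $N$. Your main argument uses $(K4)$, but $(K4)$ is only imposed when $\Omega$ is bounded, and you never show that $\Omega$ is bounded. Hypotheses $(K1)$--$(K3)$ allow infinitely many bounded components escaping to infinity. Your fallback cannot close this, because the constants $C_k$ vary freely from component to component, so there is no scaling law $w\sim h^{(1-m)/2}$; that relation holds only for zero contact angle.

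Concretely, for small $L$ the problem $w''=-1-w^m$ on $(0,L)$ with $w(0)=w(L)=0$ has a positive solution. This follows from Schauder's theorem applied to $w\mapsto G[1+w^m]$, with $G$ the Dirichlet Green operator. The solution satisfies $\max w\sim L^2$ and $\|w\|_1+\|w'\|_2^2\sim L^3$. Place translates on the intervals $(k,k+2^{-k})$, $k\ge 1$, and set $U_s=0$ elsewhere. This gives $U_s\in L^1\cap H^1\cap C(\R)$ with $U_s(x)\lesssim 4^{-x}$ and $\mu_s\equiv 1$ on $\Omega$. Thus (i) holds while $N=\infty$.

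So (i)$\Rightarrow$(ii) cannot be derived from $(K1)$--$(K4)$ as written. You need an extra hypothesis, such as compact support of $U_s$ or local finiteness of $\partial\Omega$. Once $\Omega$ is bounded, your reading of $\partial\Omega\in C^1$ as ``$\partial\Omega$ has no accumulation points'' is exactly the right ingredient, and it cannot be dropped. Indeed, the same humps, separated by gaps and accumulating at a point of a bounded interval, have contact slopes tending to $0$, so $U_s\in C^1(\overline{\Omega})$ is not violated.

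\paragraph{The paper does not fill this gap either.} It asserts without justification that all components lie in a bounded set, so that $L_k=b_k-a_k\to 0$. It then rescales each component to $(0,1)$, obtaining $v_k''+L_k^2v_k^m=L_k^2C_k$, $v_k(0)=v_k(1)=0$, $v_k>0$. Finally it argues that the limiting equation $v''=0$ forces $v_k\equiv 0$. This is only formal: each $v_k$ is a genuine positive solution for its own $k$ (the rescaled humps above are examples), and $v_k\to 0$ as $k\to\infty$ contradicts nothing. So rather than reconstructing that argument, make the additional hypothesis explicit and conclude with your $(K4)$ argument.
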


\begin{proof}
$(i)\Rightarrow(ii)$. Since $U_s \in H^1(\R)$, the one-dimensional embedding implies $U_s \in L^\infty(\R)$. Since $C_0^\infty(\R)$ is dense in $H^1(\R)$, we obtain
\begin{align}
0=\int_{\R} \mu_s \left( U_s (\mu_s)_x \right)_x dx=-\int_{\R} U_s |(\mu_s)_x|^2 dx.
\end{align}

$(ii)\Rightarrow(i)$. Since $U_s \in C(\R)$, the set $\Omega$ is open and consists of countably many disjoint open components, which may be unbounded. We write $\Omega=\bigcup_{k=1}^N (a_k,b_k)$. Where $U_s>0$, equation \eqref{fivestar} reduces to
\begin{align}
(U_s)_{xxx}+(U_s^m)_x=0
\end{align}
in the sense of distributions on $\Omega$. It follows that
\begin{align}
(U_s)_{xx}+U_s^m=C_k \quad \text{on } (a_k,b_k).
\end{align}
Different intervals may have different constants $C_k$.

We first claim that $\Omega$ has no unbounded component. Suppose for contradiction that there exists an index $k_0$ such that $(a_{k_0},b_{k_0})=(a,\infty)$. On this interval,
\begin{align*}
(U_s)_{xx}+U_s^m=C_{k_0}, \quad x \in (a,\infty).
\end{align*}
Since $U_s \to 0$ as $|x|\to\infty$, we have $U_s^m \to 0$ and $(U_s)_{xx} \to C_{k_0}$ as $|x|\to\infty$. This contradicts $U_s \in L^1(\R)$ unless $U_s \equiv 0$.

Next, we claim that $1 \le N<\infty$. Suppose, for contradiction, that $N=\infty$. Then all intervals are contained in a bounded set, so $L_k:=b_k-a_k \to 0$. Rescaling each interval to $(0,1)$ via $x=a_k+L_k y$ and setting $v_k(y)=U_s(a_k+L_k y)$, we obtain
\begin{align}
v_k''+L_k^2 v_k^m=L_k^2 \mu_k, \quad v_k(0)=v_k(1)=0, \ v_k>0.
\end{align}
As $L_k \to 0$, the equation formally becomes $v_k''=0$, whose only solution with zero boundary conditions is $v_k\equiv0$, contradicting $v_k>0$. Thus $N$ must be finite, and $\Omega$ is a finite union of bounded disjoint open intervals.
This completes the proof.
\end{proof}

A particularly important class is obtained by requiring that $U_s$ be a critical point of the free energy $F(u)$ with fixed mass $\int_{\R} u \,dx=M$. As we show below, this additional condition forces the chemical potential $\mu_s$ to be the same constant on all connected components.

\begin{proposition}\label{prop2}
Assume $(K1)$--$(K4)$. Let $U_s$ be a critical point of $F(u)$ subject to $\int_{\R}U_s \,dx=M$. Then
\begin{itemize}
\item[$(i)$] The following Pohozaev identity holds:
 \begin{align}\label{poho}
   \int_{\R} \left|(U_s)_x \right|^2 dx=\frac{2m}{3(m+1)} \int_{\R} U_s^{m+1} dx.
 \end{align}
\item[$(ii)$] The chemical potential satisfies
\begin{align}
 (U_s)_{xx}+U_s^m=\overline{C} \quad \text{a.e. } x \in \Omega,
\end{align}
where $\overline{C}=\dfrac{1}{M}\dfrac{m+3}{3(m+1)} \int_{\R} U_s^{m+1} dx$.
\item[$(iii)$] The following boundary condition holds:
\begin{align}\label{bdd}
 \sum_{k=1}^N \left( b_k (U_s)_x^2(b_k)-a_k (U_s)_x^2(a_k) \right)=0.
\end{align}
In particular, if $\Omega=[-L,L]$, then $U_s'(L)=U_s'(-L)=0$.
\end{itemize}
\end{proposition}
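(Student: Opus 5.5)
The plan is to derive all three statements from the Euler--Lagrange equation for the constrained critical point, combined with Proposition \ref{prop1} and two integral identities.

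\textbf{Step 1: the Lagrange multiplier.} Since $U_s$ is a critical point of $F$ under the mass constraint, there is a multiplier $\lambda$ with
\begin{align*}
\int_{\R} \bigl( (U_s)_x\varphi_x - U_s^m\varphi \bigr)\,dx = \lambda\int_{\R}\varphi\,dx
\end{align*}
for all admissible mass-preserving-direction perturbations $\varphi$ supported in $\overline{\Omega}$. We take $\varphi\in C_0^\infty(\Omega)$; such perturbations keep nonnegativity for small amplitude. Integrating by parts gives $-(U_s)_{xx}-U_s^m=\lambda$ on each component of $\Omega$. Hence the constants $C_k$ of Proposition \ref{prop1} all coincide, and we call the common value $\overline{C}$.

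\textbf{Step 2: two identities.} First, multiply $(U_s)_{xx}+U_s^m=\overline{C}$ by $U_s$ and integrate over each $(a_k,b_k)$. Since $U_s(a_k)=U_s(b_k)=0$, the boundary terms vanish and we get
\begin{align*}
-\int_{\R}|(U_s)_x|^2\,dx+\int_{\R}U_s^{m+1}\,dx=\overline{C}M.
\end{align*}
Second, for the Pohozaev identity we use the scaling $u_\lambda(x)=\lambda U_s(\lambda x)$, which preserves mass. We have
\begin{align*}
F(u_\lambda)=\frac{\lambda^3}{2}\int_{\R}|(U_s)_x|^2\,dx-\frac{\lambda^m}{m+1}\int_{\R}U_s^{m+1}\,dx .
\end{align*}
Criticality gives $\frac{d}{d\lambda}F(u_\lambda)\big|_{\lambda=1}=0$, which is exactly \eqref{poho}. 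The point needing care is that $\lambda\mapsto u_\lambda$ must be an admissible variation for the notion of critical point used. If that is not clear, I will instead multiply the ODE by $x(U_s)_x$ and integrate on each interval. This yields \eqref{poho} plus the boundary term from $x(U_s)_x^2/2$, and the argument then simultaneously produces (iii).

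Combining \eqref{poho} with the first identity gives
\begin{align*}
\overline{C}M=\Bigl(1-\frac{2m}{3(m+1)}\Bigr)\int_{\R}U_s^{m+1}\,dx=\frac{m+3}{3(m+1)}\int_{\R}U_s^{m+1}\,dx,
\end{align*}
which is (ii).

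\textbf{Step 3: the boundary condition (main obstacle).} The $x(U_s)_x$ multiplier computation on $(a_k,b_k)$ uses $\int x(U_s)_x(U_s)_{xx}\,dx=\bigl[x(U_s)_x^2/2\bigr]-\frac12\int(U_s)_x^2\,dx$. Here (K4) makes $(U_s)_x$ continuous up to $a_k,b_k$. Similarly $\int xU_s^m(U_s)_x\,dx=-\frac{1}{m+1}\int U_s^{m+1}\,dx$ and $\int x\overline{C}(U_s)_x\,dx=-\overline{C}\int U_s\,dx$, both with vanishing boundary terms because $U_s=0$ at the endpoints. Summing over $k$ gives
\begin{align*}
\frac12\sum_{k}\bigl[x(U_s)_x^2\bigr]_{a_k}^{b_k}-\frac12\int_{\R}(U_s)_x^2\,dx-\frac{1}{m+1}\int_{\R}U_s^{m+1}\,dx=-\overline{C}M .
\end{align*}
Substituting the first identity of Step 2 shows that the boundary sum equals the Pohozaev defect. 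So \eqref{poho}, obtained from the scaling variation, is equivalent to \eqref{bdd}.

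The main obstacle is justifying the scaling variation rigorously, i.e.\ that $F$ restricted to mass-$M$ functions is stationary along $u_\lambda$. I would argue that $u_\lambda\in\mathcal{Y}_M$ is a $C^1$ curve in $H^1$ through $U_s$, using the continuity of dilations in $H^1$. For the special case $\Omega=[-L,L]$, (iii) reads $L\bigl((U_s)_x^2(L)+(U_s)_x^2(-L)\bigr)=0$. Since $L>0$, both terms vanish, so $U_s'(\pm L)=0$.
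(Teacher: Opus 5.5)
Your route is the paper's: one multiplier from interior variations (the paper's $\phi_i-\phi_j$ argument), the Pohozaev identity from the mass-preserving dilation $u_\lambda(x)=\lambda U_s(\lambda x)$, then the multipliers $U_s$ and $x(U_s)_x$ for (ii) and (iii). The gap is exactly the step you flagged, and neither of your repairs closes it. First, the $x(U_s)_x$ multiplier is not a fallback. Combined with the $U_s$ identity it gives only
\[
\frac{3}{2}\int_{\R}|(U_s)_x|^2dx-\frac{m}{m+1}\int_{\R}U_s^{m+1}dx=\frac12\sum_{k=1}^N\bigl[x(U_s)_x^2\bigr]_{a_k}^{b_k},
\]
so it proves (i)$\iff$(iii), as you note in Step 3, but it does not ``simultaneously produce'' either. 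Second, the $C^1$-curve argument is circular. Continuity of dilations in $H^1$ gives only continuity of $\lambda\mapsto u_\lambda$. The candidate derivative at $\lambda=1$ is $U_s+x(U_s)_x$, which jumps by $b_k(U_s)_x(b_k^-)$ at $b_k$ (similarly at $a_k$). Hence it lies in $H^1(\R)$, or in $H^1_0(\Omega)$ (the closure of your Step 1 test directions), only if $a_k(U_s)_x(a_k)=b_k(U_s)_x(b_k)=0$ for all $k$. That is a strengthened form of (iii) itself.

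This is not a technicality: stationarity under support-preserving variations does not imply (i)--(iii). Take the even positive solution of $U''=-U^m$ on $(-L,L)$ with $U(\pm L)=0$, extended by zero. It satisfies (K1)--(K4) and $\int_{\R}(U_x\varphi_x-U^m\varphi)\,dx=0$ for every $\varphi\in C_0^\infty(\Omega)$. Yet energy conservation gives $U_x(\pm L)=\mp\bigl(2U(0)^{m+1}/(m+1)\bigr)^{1/2}\neq0$. The chemical potential is $0$ rather than the positive $\overline{C}$ of (ii). The $U$-multiplier gives $\int|U_x|^2=\int U^{m+1}$, which is incompatible with \eqref{poho} since $\frac{2m}{3(m+1)}<1$.

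So stationarity along the dilation must be part of the definition of a constrained critical point. More naturally for this Wasserstein-type gradient flow, one can require stationarity along mass-preserving push-forwards $(\mathrm{id}+\varepsilon\xi)_\#U_s$, which move the free boundary; $\xi(x)=x$ is the dilation. The paper does this implicitly when it inserts $\eta=U_s+x(U_s)_x$ into \eqref{Fvar}, although that $\eta$ is not in its stated class $C_0^\infty(\R)$. That literal class, with $\eta$ not supported in $\Omega$, would give $-U_s''-U_s^m=\lambda$ in $\mathcal{D}'(\R)$. Hence $\lambda=0$ off $\overline{\Omega}$, $U_s$ is concave on $\R$, and $U_s\equiv0$; so your restriction to $C_0^\infty(\Omega)$ is the right one. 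With the dilation hypothesis stated explicitly, your Steps 2 and 3 are complete and agree with the paper's proof.
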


\begin{proof}
By definition, $U_s$ is a critical point of $F(u)$ under the mass constraint $\int_{\R} U_s \,dx=M$. That is, for every $\eta \in C_0^\infty(\R)$ satisfying $\int_{\R} \eta dx=0$,
\begin{align}\label{Fvar}
    \frac{d}{d \varepsilon} \bigg |_{\varepsilon=0} F(U_s+\varepsilon \eta)=0.
\end{align}

\emph{(i) Pohozaev identity.} Consider the scaling ansatz $U_\lambda(x)=\lambda U_s(\lambda x)$ for $\lambda>0$. Since $U_s$ is a critical point, the derivative of $F(U_\lambda)$ at $\lambda=1$ vanishes; this can be verified by taking $\eta=U_s+x (U_s)_x$, which has zero integral. A direct computation yields
\begin{align}
\left. \frac{d F(U_\lambda)}{d \lambda} \right|_{\lambda=1}
=\frac{3}{2} \int_{\R} | (U_s)_x |^2 dx-\frac{m}{m+1} \int_{\R} U_s^{m+1} dx = 0.
\end{align} 
This gives the Pohozaev identity
\begin{align}\label{0501}
\frac{3}{2} \int_{\R} |(U_s)_x|^2 dx=\frac{m}{m+1} \int_{\R} U_s^{m+1} dx. 
\end{align}

\emph{(ii) Global constant chemical potential.} With the Pohozaev identity in hand, we further show that the chemical potential is globally constant. Choose two disjoint connected components $\Omega_i=(a_i,b_i)$ and $\Omega_j=(a_j,b_j)$, and take nonnegative functions $\phi_i \in C_0^\infty(\Omega_i)$, $\phi_j \in C_0^\infty(\Omega_j)$ such that $\int_{\Omega_i} \phi_i dx=\int_{\Omega_j} \phi_j dx$. Define $\eta=\phi_i-\phi_j$; clearly $\int_{\R} \eta \,dx=0$. Substituting $\eta$ into \eqref{Fvar}, we obtain
\begin{align*}
0 &=\int_{\R} \left((U_s)_x \eta_x-U_s^m \eta \right) dx \\
&= \int_{\Omega_i} \left((U_s)_x (\phi_i)_x-U_s^m \phi_i \right) dx- \int_{\Omega_j} \left((U_s)_x (\phi_j)_x-U_s^m \phi_j \right) dx \\
&=-\int_{\Omega_i} \big((U_s)_{xx}+U_s^m\big) \phi_i dx+\int_{\Omega_j} \big((U_s)_{xx}+U_s^m\big) \phi_j dx \\
&=-C_i \int_{\Omega_i} \phi_i dx+C_j \int_{\Omega_j} \phi_j dx.
\end{align*}
Hence $C_i=C_j$. Denote this common constant by $\overline{C}$; then
\begin{align}\label{0502}
(U_s)_{xx}+U_s^m=\overline{C}\quad \text{in } \Omega.
\end{align}
Multiplying \eqref{0502} by $U_s$ and integrating over $\R$ gives
\begin{align}\label{0503}
\int_{\R} U_s^{m+1} dx-\int_{\R} |(U_s)_x|^2 dx=\overline{C} M.
\end{align}
Combining \eqref{0501} and \eqref{0503}, we arrive at
\begin{align*}
\overline{C}=\frac{1}{M} \frac{m+3}{3(m+1)} \int_{\R} U_s^{m+1} dx.
\end{align*}

\emph{(iii) Boundary conditions.} Recall that $\Omega=\bigcup_{k=1}^N (a_k,b_k)$. Multiplying \eqref{0502} by $x (U_s)_x$ and by $U_s$, respectively, we get
\begin{align*}
\frac{1}{2} x (U_s)_x^2 \bigg |_{a_k}^{b_k}- \frac{1}{2}\int_{a_k}^{b_k} (U_s)_x^2 dx-\frac{1}{m+1}\int_{a_k}^{b_k} U_s^{m+1} dx
=-\overline{C} \int_{a_k}^{b_k}U_s dx
\end{align*}
and 
\begin{align*}
\int_{a_k}^{b_k}(U_s)_x^2 dx-\int_{a_k}^{b_k} U_s^{m+1} dx
=-\overline{C} \int_{a_k}^{b_k}U_s dx.
\end{align*}
Combining these yields
\begin{align*}
\frac{3}{2} \sum_{k=1}^N \int_{a_k}^{b_k} (U_s)_x^2 dx
=\frac{m}{m+1} \sum_{k=1}^N \int_{a_k}^{b_k} U_s^{m+1} dx
+\frac{1}{2} \sum_{k=1}^N \left( b_k (U_s)_x^2(b_k)-a_k (U_s)_x^2(a_k) \right).
\end{align*}
Applying \eqref{0501}, we conclude
\begin{align*}
\sum_{k=1}^N \left( b_k (U_s)_x^2(b_k)-a_k (U_s)_x^2(a_k) \right)=0.
\end{align*}
In particular, when $N=1$ with $\Omega=[-L,L]$, we have
\begin{align*}
L (U_s)_x^2(-L)+L (U_s)_x^2(L)=0,
\end{align*}
which forces $U_s'(-L)=U_s'(L)=0$. The proof is complete.
\end{proof}

Using the Pohozaev identity, we now derive the free energy of steady states.

\begin{corollary}
The free energy of a steady state with mass $M$ satisfies
\begin{align}\label{Fus}
F(U_s)=\frac{m-3}{3(m+1)} \|U_s\|_{m+1}^{m+1}
\left\{
\begin{array}{ll}
>0, & m>3, \\
=0, & m=3, \\
<0, & 0<m<3.
\end{array}
\right.
\end{align}
\end{corollary}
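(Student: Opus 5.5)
The plan is to substitute the Pohozaev identity \eqref{poho} of Proposition \ref{prop2}(i) directly into the definition of the free energy. Here "steady state with mass $M$" is read as in Proposition \ref{prop2}: a critical point of $F$ under the mass constraint satisfying $(K1)$--$(K4)$.

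First, I would recall that
\begin{align*}
F(U_s)=\frac{1}{2}\int_{\R}|(U_s)_x|^2\,dx-\frac{1}{m+1}\int_{\R}U_s^{m+1}\,dx .
\end{align*}
Then I would replace the gradient term using $\int_{\R}|(U_s)_x|^2dx=\frac{2m}{3(m+1)}\int_{\R}U_s^{m+1}dx$, which gives
\begin{align*}
F(U_s)=\left(\frac{m}{3(m+1)}-\frac{1}{m+1}\right)\|U_s\|_{m+1}^{m+1}=\frac{m-3}{3(m+1)}\|U_s\|_{m+1}^{m+1}.
\end{align*}

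For the sign, I note that $U_s\ge0$ is continuous with $\int_{\R}U_s\,dx=M>0$, so $U_s\not\equiv0$ and hence $\|U_s\|_{m+1}^{m+1}>0$. The sign of $F(U_s)$ is therefore that of $m-3$: positive for $m>3$, zero for $m=3$, and negative for $0<m<3$.

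The only point needing care is the validity of \eqref{poho} itself. It is derived in Proposition \ref{prop2} by a variation along the dilation $U_\lambda(x)=\lambda U_s(\lambda x)$, which preserves mass; that variation requires $U_s\in H^1$ and, when the support is bounded, the boundary regularity in $(K4)$. Both are part of the standing hypotheses, so I would simply invoke Proposition \ref{prop2}(i), and the corollary follows with no further obstacle.
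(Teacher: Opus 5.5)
Your proposal is correct and follows the paper's argument: the corollary is presented right after Proposition \ref{prop2} as a direct consequence of substituting the Pohozaev identity \eqref{poho} into $F$, which gives the coefficient $\frac{m}{3(m+1)}-\frac{1}{m+1}=\frac{m-3}{3(m+1)}$. The sign is then that of $m-3$, since $\|U_s\|_{m+1}^{m+1}>0$ for $M>0$; reading ``steady state'' as a mass-constrained critical point of $F$, as you do, is exactly what this derivation relies on.
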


\subsection{Proof of Theorem \ref{Nagyoptimi}} \label{subsec2}

This subsection is devoted to the proof of Theorem \ref{Nagyoptimi}. We establish a direct correspondence between steady-state solutions of \eqref{2} and the extremal functions of the Sz.-Nagy inequality. This relation will be used to identify the critical profile that determines the threshold between global existence and finite-time blow-up. We first recall the Euler-Lagrange equation satisfied by extremals of the Sz.-Nagy inequality, which was established in \cite{Sz.-Nagy41}.

\begin{lemma}[Extremals of the Sz.-Nagy inequality \cite{Sz.-Nagy41}]\label{prop3}
Let $0<m<\infty$ and $\alpha=\frac{m+3}{m}$. For any $u \in L^1(\R) \cap H^1(\R)$, equality in \eqref{Nagy} is attained if and only if
$u=\lambda V(\mu |x-x_0|)$ for some $\lambda>0$, $\mu>0$, and $x_0 \in \R$,
where $V \ge 0$ is radially symmetric, non-increasing, and solves the Euler-Lagrange equation
\begin{align}\label{VV2}
-V_{xx}-\frac{\alpha+2}{2} \frac{\|V\|_{m+1}^{\alpha+1-m}}{\|V\|_1^\alpha C_*} V^m
=
-\frac{\alpha}{2}\frac{\|V\|_{m+1}^{\alpha+2}}{\|V\|_1^{\alpha+1} C_*}
\quad \text{a.e. in } \operatorname{supp}(V).
\end{align}
\end{lemma}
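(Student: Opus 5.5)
The plan is to pass to a rearrangement-invariant reduced problem and then use a one-dimensional ODE (first-integral) analysis, in the spirit of Sz.-Nagy's original argument.

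\emph{Existence.} Define the quotient $J(u)=\|u\|_{m+1}^{3(m+1)/m}\big/\bigl(\|u\|_1^{(m+3)/m}\|u_x\|_2^2\bigr)$. It is invariant under the two-parameter family $u\mapsto \lambda u(\mu\,\cdot)$ and under translations. By the P\'olya--Szeg\H{o} inequality, replacing $u$ by its symmetric decreasing rearrangement $u^\ast$ keeps the $L^1$ and $L^{m+1}$ norms and does not increase $\|u_x\|_2$. Hence we may restrict to even, nonincreasing functions. I then normalise $\|u\|_1=1$ and $\|u_x\|_2=1$ with the scalings and take a maximising sequence. Such functions satisfy $|u(x)|\le 1/(2|x|)$ and are uniformly $H^1$ bounded. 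Helly's theorem and the compact local embedding give a subsequence converging locally uniformly. The tail bound together with $L^\infty$ control gives tightness of $\int u^{m+1}$, because $|x|^{-(m+1)}$ is integrable at infinity. Therefore $\|u_n\|_{m+1}\to\|u\|_{m+1}$. Lower semicontinuity of $\|u_x\|_2$ and of $\|u\|_1$, combined with the fact that the functional increases when those denominators shrink, shows that the limit $V$ is a maximiser and is nonzero.

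\emph{Euler--Lagrange equation.} Differentiate $\log J(V+\varepsilon\eta)$ at $\varepsilon=0$. For $\eta\in C_0^\infty$ supported in $\{V>0\}$ this is a two-sided variation. On $\{V=0\}$ only $\eta\ge0$ is allowed, and this is where the $L^1$ term, via $\operatorname{sign}$, contributes a one-sided inequality. On the support we obtain a linear combination of $-V_{xx}$, $V^m$ and a constant. Matching the coefficients using $\|V\|_1$, $\|V\|_{m+1}$, $\|V_x\|_2$ and the value $J(V)=C_\ast$ gives exactly \eqref{VV2}, with $\alpha=(m+3)/m$.

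\emph{Classification.} Write the equation as $V''+aV^m=b$ with $a,b>0$ on the support. Multiplying by $V'$ gives the first integral
\[
\tfrac12 V'^2+\tfrac{a}{m+1}V^{m+1}-bV=E .
\]
Since $V\in L^1\cap H^1$ is even and decreasing, $V\to0$ and $V'\to0$ at the edge of its support, so $E=0$. If the support were unbounded, the relation $V'^2\approx 2bV$ near $V=0$ would force $V$ to reach zero in finite distance. Hence the support is a bounded interval $[-L,L]$, the contact angle is zero there, and $V$ is obtained by quadrature:
\[
\int_V^{V(0)}\frac{ds}{\sqrt{2bs-\tfrac{2a}{m+1}s^{m+1}}}=|x| .
\]
This yields the Beta-function value of $C_\ast$ and shows that $V$ is unique up to the parameters $(\lambda,\mu,x_0)$.

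\emph{Converse.} Conversely, any $u=\lambda V(\mu|x-x_0|)$ attains equality in \eqref{Nagy}, by the scaling invariance of $J$.

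\emph{Main obstacle.} The delicate step is the "only if" direction, namely that every extremal, not only the rearranged one, is of this form. I would first show that an extremal $u$ must satisfy $J(u^\ast)=J(u)$, which forces equality in P\'olya--Szeg\H{o}. Equality in P\'olya--Szeg\H{o} does not by itself give $u=u^\ast$ up to translation, since one needs an extra condition such as the level sets having measure-zero critical set. I would supply this by noting that $u$ itself satisfies the Euler--Lagrange ODE on each component of its support. The first-integral argument then shows that each component carries a hump that is a translate of the same profile. For a maximiser, more than one hump is impossible, because concentrating the mass into one hump via rescaling strictly increases $J$ (strict superadditivity from the exponents). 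That concludes the characterisation.
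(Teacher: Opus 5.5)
Since the paper gives no proof of this lemma and only cites \cite{Sz.-Nagy41}, your argument has to stand on its own. Its architecture is reasonable: rearrangement plus compactness for existence, first variation, first integral, and exclusion of several humps. There is, however, a genuine gap at the free boundary.

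In the classification step you say that, because $V\in L^1(\R)\cap H^1(\R)$ is even and non-increasing, $V\to0$ and $V'\to0$ at the edge of its support, so $E=0$. You then conclude that the contact angle is zero. This is circular. For a bounded support $[-L,L]$, membership in $H^1$ does not rule out a corner at $\pm L$ (think of $(1-|x|)_+$), and $V_x(L^-)=0$ is exactly the free-boundary condition that has to be proved.

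The gap matters. For fixed $a,b>0$, the equation $V''+aV^m=b$ has a one-parameter family of even humps vanishing at $\pm L$, with $V_x(L^-)=-\sqrt{2E}$ and $E\ge0$. Without $E=0$, three steps fail: uniqueness up to $(\lambda,\mu,x_0)$, the quadrature formula, and the claim that every component of $\{u>0\}$ of a non-symmetric extremal is a translate of the same hump. Your exclusion of unbounded support is fine; it follows even more simply from $V''\to b>0$.

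A related gap is in your converse. The argument ``by scaling invariance of $J$'' assumes $V$ is already an extremal. The ``if'' direction of the lemma, however, concerns an arbitrary even non-increasing solution of \eqref{VV2}.

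Both gaps can be closed.

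\emph{Zero contact angle for extremals.} Use the one-sided variation you mention, but with $\eta\ge0$ supported in a small neighbourhood of $L$ and $\eta(L)=1$. Integrate the Dirichlet term by parts and use the Euler--Lagrange equation inside $(-L,L)$. The condition $\frac{d}{d\varepsilon}\log J(V+\varepsilon\eta)\big|_{\varepsilon=0^+}\le0$ then reduces to
\[
-\frac{\alpha}{\|V\|_1}\int_{\{V=0\}}\eta\,dx-\frac{2\,V_x(L^-)\,\eta(L)}{\|V_x\|_2^2}\le 0 .
\]
Shrinking the support of $\eta$ gives $V_x(L^-)\ge0$, hence $V_x(L^-)=0$. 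The same computation works at every endpoint of every component of $\{u>0\}$, including zeros shared by adjacent components, which is what your ``only if'' step needs. The informative contribution comes from the boundary term of $\int V_x\eta_x$, not from the $L^1$ term.

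\emph{The ``if'' direction.} Use the structure of \eqref{VV2} directly. Set $K:=\|V\|_{m+1}^{\alpha+2}/(C_*\|V\|_1^{\alpha})$. Multiplying \eqref{VV2} by $V$ and integrating over $(-L,L)$ gives
\[
\|V_x\|_2^2=\tfrac{\alpha+2}{2}K-\tfrac{\alpha}{2}K=K,
\]
which is equality in \eqref{Nagy} immediately, with no classification needed. Multiplying instead by $xV_x$ and using this gives
\[
L\,V_x(L^-)^2=\frac{K}{2}\cdot\frac{m+3-\alpha m}{m+1}=0 .
\]
So every solution of \eqref{VV2} has zero contact angle, and $E=0$ holds in general.

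With these inserted, your classification completes the proof. The exclusion of several humps is in fact exact: $N$ disjoint translates of $V$ give $J=N^{-2}J(V)$. The appeal to equality in P\'olya--Szeg\H{o} then becomes unnecessary.
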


By choosing a suitable normalization of the $L^{m+1}$-norm, the extremal functions can be transformed into steady states of \eqref{2}. We now present the proof of Theorem \ref{Nagyoptimi}.

\begin{proof}[Proof of Theorem \ref{Nagyoptimi}.] The proof can be divided into two steps. 
\textbf{Step 1} (Proof of (i)). For $m \neq 3$, we recall the definition of $P_\ast$ from \eqref{Pstar}:
\begin{align}
P_\ast=\left( \frac{\alpha+2}{2 C_* M^\alpha} \right)^{\frac{1}{m-\alpha-1}}
=\left( \frac{3(m+1)}{2m C_* M^\alpha} \right)^{\frac{m}{(m-3)(m+1)}}.
\end{align}
For a given solution $V_0(x)$ to \eqref{VV2}, we define
\begin{align}\label{WW1}
U_\ast(x)=\frac{1}{\lambda_\ast} V_0 \left( \frac{x}{\mu_\ast} \right),
\end{align}
where
\begin{align}
\lambda_\ast=\left( \frac{M}{\|V_0\|_1} \frac{\|V_0\|_{m+1}^{m+1}}{P_\ast^{m+1}} \right)^{1/m},\quad
\mu_\ast= \frac{\lambda_\ast M}{\|V_0\|_1}.
\end{align}
Then $U_\ast$ is also a solution to \eqref{VV2} satisfying
\begin{align}
-(U_\ast)_{xx}=U_\ast^m-\frac{1}{M}\frac{m+3}{3(m+1)} \|U_\ast\|_{m+1}^{m+1}
\quad \text{a.e. in } \operatorname{supp}(U_\ast)
\end{align}
with
\begin{align}\label{MPstar}
\|U_\ast\|_1=M,\quad \|U_\ast\|_{m+1}=P_\ast.
\end{align}
This shows that $U_\ast$ is a radially symmetric, non-increasing steady-state solution to \eqref{2}.

Next, applying Theorem \ref{prop2}(iii), we conclude that for $0<m<\infty$, $U_\ast$ solves the free boundary problem
\begin{align}\label{radialequation}
\left\{
  \begin{array}{ll}
   -u''=u^m-\frac{1}{M}\frac{m+3}{3(m+1)} P_\ast^{m+1}, \quad 0<r<L, \\[2mm]
    u'(0)=0,\quad u(L)=u'(L)=0,
  \end{array}
\right.
\end{align}
with $0<L<\infty$. It follows from \cite[Theorem 3]{PS98} that this problem admits a unique solution. This completes the proof of (i).

{\it\textbf{Step 2}} (Proof of (ii)). For $m=3$, we have $\alpha+2=m+1$. Then \eqref{VV2} reduces to
\begin{align}\label{VV3}
-V_{xx}= \frac{2}{\|V\|_1^2 C_*} V^m-\frac{\|V\|_{m+1}^{m+1}}{\|V\|_1^{3} C_*} \quad \text{a.e. in } \operatorname{supp}(V).
\end{align}
Hence, a solution $V_c$ to \eqref{VV3} is also a steady-state solution to \eqref{2} if and only if
\begin{align}
\|V_c\|_1 = M_c=\left( \frac{2}{C_*} \right)^{1/2}, \quad C_*=\frac{9}{4 \pi^2}.
\end{align}
Thus, $V_c$ satisfies
\begin{align}\label{VV4}
-(V_c)_{xx}=V_c^m-\frac{1}{2}\frac{\|V_c\|_{m+1}^{m+1}}{M_c} \quad \text{a.e. in } \operatorname{supp}(V_c).
\end{align}
Observe that equation \eqref{VV4} is scaling invariant: if $V_c$ is a solution, then $\mu V_c(\mu x)$ is also a solution for any $\mu>0$ with the same mass $M_c$. Therefore, the solutions to \eqref{VV4} form a one-parameter family of radial steady states. This completes the proof of Theorem \ref{Nagyoptimi}.
\end{proof}

\subsection{Proof of Theorem \ref{fumini}}\label{subsec3}

This subsection is dedicated to the proof of Theorem \ref{fumini}. We demonstrate that the extremal functions of the Sz.-Nagy inequality act as ground states, which are global minimizers of the free energy under suitable constraints. The proof is split into three cases based on the range of the exponent m: Proposition \ref{picture} addresses part (i), Proposition \ref{picture1} covers part (ii), and Proposition \ref{picture2} deals with part (iii). For subsequent analysis, we first define the set of steady-state solutions with prescribed mass $M$ as 
\begin{align}
\mathcal{S}_M := { u \in \mathcal{Y}_M : u \text{ is a steady-state solution to } \eqref{2} }.
\end{align}

\begin{proposition}[Aggregation-dominated regime]\label{picture}
Let $3<m<\infty$, $\alpha=\frac{m+3}{m}$, and let $P_\ast$ be defined by \eqref{Pstar}. Let $U_\ast$ be the unique optimizer given by Theorem \ref{Nagyoptimi}(i). Then the following statements hold:
\begin{itemize}
   \item[(i)] $U_\ast$ is the unique global minimizer of $F(u)$ in $\mathcal{Y}_{M,P_\ast}$.
   \item[(ii)] $U_\ast$ is the unique global minimizer of $F(u)$ in $\mathcal{S}_M$.
\end{itemize} 
Moreover, 
\begin{align}
\inf_{u \in \mathcal{Y}_{M,P_\ast}} F(u) = \inf_{u \in \mathcal{S}_{M}} F(u)= F(U_\ast)=\frac{m-3}{3(m+1)} P_\ast^{m+1}.
\end{align}   
\end{proposition}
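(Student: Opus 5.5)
On $\mathcal{Y}_{M,P_\ast}$ both the mass and the $L^{m+1}$-norm are fixed. So $F(u)=\frac12\|u_x\|_2^2-\frac{1}{m+1}P_\ast^{m+1}$, and minimizing $F$ there amounts to minimizing $\|u_x\|_2^2$. The Sz.-Nagy inequality \eqref{Nagy} gives, for every $u\in\mathcal{Y}_{M,P_\ast}$,
\begin{align*}
\|u_x\|_2^2 \ge \frac{P_\ast^{\frac{3(m+1)}{m}}}{C_\ast M^{\frac{m+3}{m}}}.
\end{align*}
By the definition \eqref{Pstar}, $C_\ast M^{\frac{m+3}{m}}=\frac{3(m+1)}{2m}P_\ast^{-\frac{(m+1)(m-3)}{m}}$. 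Substituting, the right-hand side equals $\frac{2m}{3(m+1)}P_\ast^{m+1}$. Hence
\begin{align*}
F(u)\ge \Bigl(\frac{m}{3(m+1)}-\frac{1}{m+1}\Bigr)P_\ast^{m+1}=\frac{m-3}{3(m+1)}P_\ast^{m+1}.
\end{align*}
Equality holds exactly when $u$ is an extremal of \eqref{Nagy} lying in $\mathcal{Y}_{M,P_\ast}$. By Theorem \ref{Nagyoptimi}(i) this extremal is unique up to translation, namely $U_\ast$. To confirm the value is attained, I would use \eqref{MPstar} together with the Pohozaev identity \eqref{poho} for $U_\ast$, which gives $\|(U_\ast)_x\|_2^2=\frac{2m}{3(m+1)}P_\ast^{m+1}$. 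Plugging this into \eqref{Fus} recovers the same constant. This proves (i) and identifies the infimum over $\mathcal{Y}_{M,P_\ast}$.

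For (ii), take $U_s\in\mathcal{S}_M$, a steady state that is a critical point of $F$ under the mass constraint, so that Proposition \ref{prop2} applies. By \eqref{Fus}, $F(U_s)=\frac{m-3}{3(m+1)}\|U_s\|_{m+1}^{m+1}$. Since $m>3$, it suffices to prove $\|U_s\|_{m+1}\ge P_\ast$, with equality only for $U_s=U_\ast$. Inserting the Pohozaev identity \eqref{poho} into \eqref{Nagy} gives
\begin{align*}
\|U_s\|_{m+1}^{\frac{3(m+1)}{m}}\le C_\ast M^{\frac{m+3}{m}}\frac{2m}{3(m+1)}\|U_s\|_{m+1}^{m+1}.
\end{align*}
Dividing through yields $\|U_s\|_{m+1}^{(m+1)\frac{3-m}{m}}\le \frac{2m}{3(m+1)}C_\ast M^{\frac{m+3}{m}}=P_\ast^{-\frac{(m+1)(m-3)}{m}}$. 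Since $m>3$, the exponent on the left is negative. Inverting it gives $\|U_s\|_{m+1}\ge P_\ast$, hence $F(U_s)\ge F(U_\ast)$. If equality holds, then equality holds in \eqref{Nagy}, so $U_s$ is an extremal with $\|U_s\|_{m+1}=P_\ast$. By Theorem \ref{Nagyoptimi}(i), $U_s=U_\ast$ up to translation. The statement of uniqueness should be read modulo translations.

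I expect the main obstacle to be bookkeeping rather than ideas. The exponent algebra around $P_\ast$ must be checked, and the direction of the inequality after inverting the negative exponent is easy to get wrong. One also has to make sure that every element of $\mathcal{S}_M$ satisfies the Pohozaev identity. That identity was derived in Proposition \ref{prop2} only for constrained critical points of $F$. For a general steady state in the sense of \eqref{fivestar}, Proposition \ref{prop1} gives only a constant on each component. I would therefore either interpret $\mathcal{S}_M$ as constrained critical points, consistent with Proposition \ref{prop2}, or derive \eqref{poho} directly from $(U_s)_{xx}+U_s^m=\overline C$ using the boundary condition \eqref{bdd}.
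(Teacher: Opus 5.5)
Your proposal is correct and follows the paper's proof essentially step for step. For (i) you apply Sz.-Nagy at fixed mass and $L^{m+1}$-norm; for (ii) you combine Sz.-Nagy with the Pohozaev identity \eqref{poho} to force $\|w\|_{m+1}\ge P_\ast$ and then use \eqref{Fus}. Your remark that uniqueness holds only up to translation is a correct refinement.

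Your worry about \eqref{poho} is well founded. The paper's identity \eqref{sm} tacitly treats every element of $\mathcal{S}_M$ as a mass-constrained critical point, so your first reading is the one that matches the paper. Your fallback of deriving \eqref{poho} from \eqref{bdd} would be circular, however. In Proposition \ref{prop2}, \eqref{bdd} was itself obtained from \eqref{poho}, so that route works only if zero contact angles are assumed independently.
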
 

\begin{proof}
We first state two identities that follow directly from the Sz.-Nagy inequality \eqref{Nagy} and the Pohozaev identity \eqref{poho}:
\begin{align}
  u \text{ is an optimizer of the Sz.-Nagy inequality \eqref{Nagy} } &\Rightarrow \|u\|_{m+1}^{\alpha+2} = C_* \|u\|_1^\alpha \|u_x\|_2^2, \label{mpstar} \\
  u \text{ is a steady-state solution to \eqref{2} } &\Rightarrow \frac{m}{m+1} \|u\|_{m+1}^{m+1}=\frac{3}{2} \|u_x\|_2^2.  \label{sm}
\end{align}

\textbf{Step 1} (Proof of (i)). For a given mass $M>0$ and any $v \in \mathcal{Y}_{M,P_\ast}$, the Sz.-Nagy inequality \eqref{Nagy} yields
\begin{align}
F(v) & = \frac{1}{2}\|v_x\|_2^2-\frac{1}{m+1}\|v\|_{m+1}^{m+1} \nonumber \\
& \ge \frac{1}{2C_\ast M^\alpha} \|v\|_{m+1}^{\alpha+2}-\frac{1}{m+1}\|v\|_{m+1}^{m+1} \nonumber \\
& = \frac{1}{2C_\ast M^\alpha} P_\ast^{\alpha+2}-\frac{1}{m+1}P_\ast^{m+1} \nonumber \\
& = \frac{m-3}{3(m+1)} \|U_\ast\|_{m+1}^{m+1} = F(U_\ast),  \label{Fmin}
\end{align}
where we have used Corollary \ref{Fus}. Thus, $F(v) \ge F(U_\ast)$ for all $v \in \mathcal{Y}_{M,P_\ast}$. By \eqref{mpstar}, equality holds in \eqref{Fmin} if and only if $v=U_\ast$. This completes the proof of (i).

\textbf{Step 2} (Proof of (ii)). For any $w \in \mathcal{S}_M$, the Sz.-Nagy inequality \eqref{Nagy} and identity \eqref{sm} imply
\begin{align}
\|w\|_{m+1}^{\alpha+2} & \le C_* M^\alpha \|w_x\|_2^2 \nonumber \\
& = C_* M^\alpha \frac{2m}{3(m+1)} \|w\|_{m+1}^{m+1} \nonumber \\
&= \|w\|_{m+1}^{\alpha+2} \frac{\|w\|_{m+1}^{m-\alpha-1}}{P_\ast^{m-\alpha-1}}  = \|w\|_{m+1}^{\alpha+2} \frac{\|w\|_{m+1}^{m-\alpha-1}}{\|U_\ast\|_{m+1}^{m-\alpha-1}}. \label{m1min}
\end{align}
Since $m-\alpha-1>0$ for $m>3$, we conclude that $\|w\|_{m+1} \ge \|U_\ast\|_{m+1}$ for all $w \in \mathcal{S}_M$. It then follows from Corollary \ref{Fus} that
\begin{align}
F(w) =\frac{m-3}{3(m+1)} \|w\|_{m+1}^{m+1}  \ge \frac{m-3}{3(m+1)} \|U_\ast\|_{m+1}^{m+1}=F(U_\ast).
\end{align}
By \eqref{mpstar}, equality holds in \eqref{m1min} if and only if $w=U_\ast$. This completes the proof of (ii).
\end{proof}

\begin{proposition}[Fair competition regime]\label{picture1}
Let $m=3$ and let $M_c$ be defined by \eqref{Mc}. Then there exist infinitely many global minimizers of $F(u)$ over $\mathcal{Y}_{M_c}$. These minimizers are radially decreasing, compactly supported steady-state solutions $u_c$ to \eqref{2}. Moreover,
\begin{align}
\inf_{u \in \mathcal{Y}_{M_c}} F(u)=F(u_c)=0.
\end{align}
\end{proposition}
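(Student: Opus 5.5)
The plan is to combine the Sz.-Nagy inequality \eqref{Nagy} at $m=3$ with the definition of $M_c$ in \eqref{Mc}. For $m=3$ we have $\alpha=\frac{m+3}{m}=2$ and $\alpha+2=m+1=4$, so \eqref{Nagy} becomes $\|u\|_4^4\le C_*\|u\|_1^2\|u_x\|_2^2$.

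\textbf{Step 1: lower bound.} For any $u\in\mathcal{Y}_{M_c}$, using $M_c^2=2/C_*$ gives
\begin{align*}
F(u)=\frac12\|u_x\|_2^2-\frac14\|u\|_4^4\ \ge\ \frac12\|u_x\|_2^2-\frac14 C_*M_c^2\|u_x\|_2^2=0 .
\end{align*}
Hence $\inf_{\mathcal{Y}_{M_c}}F\ge 0$, and $F(u)=0$ holds if and only if $u$ attains equality in \eqref{Nagy}.

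\textbf{Step 2: the bound is attained.} By Lemma \ref{prop3}, extremals exist: they are of the form $\lambda V(\mu|x-x_0|)$, with $V$ nonnegative, radially symmetric and non-increasing. Equality in \eqref{Nagy} is invariant under $u\mapsto \lambda u(\mu x)$, so we can choose $\lambda/\mu$ to bring the mass to $M_c$. The resulting function lies in $\mathcal{Y}_{M_c}$ and has $F=0$, so the infimum equals $0$. The minimizers are therefore exactly the optimizers of \eqref{Nagy} with mass $M_c$.

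\textbf{Step 3: structure of the minimizers.} By Theorem \ref{Nagyoptimi}(ii) (Step 2 of its proof), an extremal $V_c$ with $\|V_c\|_1=M_c$ satisfies \eqref{VV4}. This is the constant-chemical-potential equation, so $V_c$ is a steady state of \eqref{2} in the sense of Proposition \ref{prop1}. Proposition \ref{prop1} then shows that its support is a finite union of bounded intervals. Radial monotonicity reduces the support to a single interval $[-L,L]$, which gives compact support. Proposition \ref{prop2}(iii) then yields the zero contact angle.

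\textbf{Step 4: infinitely many minimizers.} The mass-invariant scaling $u_\mu(x)=\mu u_c(\mu x)$ preserves $\|u\|_1$ and scales both $\|u_x\|_2^2$ and $\|u\|_4^4$ by $\mu^3$. It therefore keeps $F=0$, so the family $\{u_\mu\}_{\mu>0}$ consists of distinct minimizers. Translations give further minimizers.

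The main obstacle is Step 3, namely justifying compact support. Lemma \ref{prop3} only asserts that the Euler--Lagrange equation holds on $\operatorname{supp}V$. To apply Proposition \ref{prop1} we need the regularity assumptions $(K1)$--$(K4)$, which I would either assume from the classical description of the extremals or verify directly. For the direct check, I would note that the constant on the right of \eqref{VV4} is strictly negative, because $\|V_c\|_4^4>0$. An unbounded support then contradicts integrability, exactly as in the argument of Proposition \ref{prop1}.
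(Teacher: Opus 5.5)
Your proposal is correct and follows essentially the same route as the paper. The paper likewise applies the Sz.-Nagy inequality with $\|u\|_1=M_c=(2/C_*)^{1/2}$ to get $F\ge 0$ on $\mathcal{Y}_{M_c}$, with equality exactly for the optimizers, and then reads off the structure of the minimizers (steady state, radial monotonicity, the one-parameter family under mass-invariant scaling) from Theorem \ref{Nagyoptimi}(ii). Your Steps 2--4 spell out what the paper compresses into that single appeal. In particular, your direct compact-support argument supplies a point the paper leaves implicit: the positive constant in $(V_c)_{xx}+V_c^3=\tfrac12\|V_c\|_4^4/M_c$ cannot coexist with $V_c\to 0$ on an unbounded positivity set.
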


\begin{proof}
For any $u_c \in \mathcal{Y}_{M_c}$, the equality case of the Sz.-Nagy inequality \eqref{Nagy} yields
\begin{align}
F(u_c) & =\frac{1}{2}\|(u_c)_x\|_2^2-\frac{1}{4}\|u_c\|_{4}^{4}  \nonumber \\
& \ge \frac{1}{2} \frac{\|u_c\|_{4}^{4}}{C_\ast \|u_c\|_1^2}-\frac{1}{4}\|u_c\|_{4}^{4} \nonumber\\
& = \frac{1}{2} \frac{\|u_c\|_{4}^{4}}{C_\ast M_c^2}-\frac{1}{4}\|u_c\|_{4}^{4} = 0. \label{Fmass}
\end{align}
Equality holds in \eqref{Fmass} if and only if $u_c$ is a radially decreasing, compactly supported steady-state solution to \eqref{2} with mass $M_c$. This completes the proof.
\end{proof}

\begin{proposition}[Diffusion-dominated regime]\label{picture2}
Let $0<m<3$, $\alpha=\frac{m+3}{m}$, and let $U_\ast$ be the unique optimizer from Theorem \ref{Nagyoptimi}(i). Then $U_\ast$ is the unique global minimizer of $F(u)$ in $\mathcal{Y}_M$. Moreover,
\begin{align}
\inf_{u \in \mathcal{Y}_M} F(u)=F(U_\ast)=\frac{m-3}{3(m+1)} P_\ast^{m+1}.
\end{align}
\end{proposition}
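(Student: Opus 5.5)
For any $u \in \mathcal{Y}_M$ I will reduce $F$ to a function of the single variable $p=\|u\|_{m+1}$ via the Sz.-Nagy inequality \eqref{Nagy}. Since $\|u_x\|_2^2 \ge \|u\|_{m+1}^{\alpha+2}/(C_\ast M^\alpha)$ with $\alpha=\frac{m+3}{m}$, we get
\begin{align*}
F(u) \ge g(p):=\frac{1}{2C_\ast M^\alpha}p^{\alpha+2}-\frac{1}{m+1}p^{m+1},\qquad p=\|u\|_{m+1}>0 .
\end{align*}
For $0<m<3$ we have $\alpha+2=\frac{3(m+1)}{m}>m+1$, so the positive term has the higher power. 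Hence $g(p)\to+\infty$ as $p\to\infty$, and $g(p)<0$ for small $p$. A direct computation gives
\begin{align*}
g'(p)=p^{m}\left(\frac{\alpha+2}{2C_\ast M^\alpha}p^{\alpha+1-m}-1\right).
\end{align*}
Here $\alpha+1-m>0$, so $g'$ has a unique zero at $p=\left(\frac{2C_\ast M^\alpha}{\alpha+2}\right)^{1/(\alpha+1-m)}$. This is exactly $P_\ast$ from \eqref{Pstar}, written as in Step 1 of the proof of Theorem \ref{Nagyoptimi}. The function $g$ decreases on $(0,P_\ast)$ and increases afterwards, so $\min_{p>0} g=g(P_\ast)$.

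Next I evaluate $g(P_\ast)$. Using $\frac{1}{2C_\ast M^\alpha}P_\ast^{\alpha+2}=\frac{1}{\alpha+2}P_\ast^{m+1}=\frac{m}{3(m+1)}P_\ast^{m+1}$, this gives $g(P_\ast)=\frac{m-3}{3(m+1)}P_\ast^{m+1}$. By the computation \eqref{Fmin}, or equivalently by \eqref{Fus} together with \eqref{MPstar}, this equals $F(U_\ast)$. Since $U_\ast\in\mathcal{Y}_M$, we obtain $\inf_{\mathcal{Y}_M}F=F(U_\ast)$, and the value is negative because $m<3$.

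For uniqueness, suppose $F(u)=F(U_\ast)$ with $u\in\mathcal{Y}_M$. Then both inequalities in the chain must be equalities. First, $g(\|u\|_{m+1})=g(P_\ast)$, and since the minimum of $g$ is strict this forces $\|u\|_{m+1}=P_\ast$. Second, equality must hold in \eqref{Nagy}. By Lemma \ref{prop3}, $u$ is then an optimizer $\lambda V(\mu|x-x_0|)$. Together with $\|u\|_1=M$ and $\|u\|_{m+1}=P_\ast$, Theorem \ref{Nagyoptimi}(i) identifies $u=U_\ast$, up to the translation $x_0$, which I take as understood, as in Theorem \ref{Nagyoptimi}.

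The main obstacle is this uniqueness step. The equality case requires Lemma \ref{prop3}'s characterization on all of $L^1\cap H^1$. It also requires that the two normalizations $\|u\|_1=M$ and $\|u\|_{m+1}=P_\ast$ fix both parameters $\lambda$ and $\mu$. This holds because for $m\ne 3$ the map $(\lambda,\mu)\mapsto(\|u\|_1,\|u\|_{m+1})$ is injective: $\|u\|_1\propto\lambda/\mu$ and $\|u\|_{m+1}\propto\lambda\mu^{-1/(m+1)}$, and the resulting exponent system is nondegenerate. The rest of the argument is elementary calculus on $g$.
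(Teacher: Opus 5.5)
Your proposal is correct and follows the paper's proof. You bound $F(u)\ge g(\|u\|_{m+1})$ via the Sz.-Nagy inequality, observe that for $0<m<3$ the function $g$ attains its strict minimum at $P_\ast$ with $g(P_\ast)=\frac{m-3}{3(m+1)}P_\ast^{m+1}=F(U_\ast)$, and identify the equality case with $U_\ast$ through Theorem \ref{Nagyoptimi}(i). Your treatment of uniqueness is more careful than the paper's one-line appeal: you check that both inequalities must be equalities, and you note that, since $F$ and $\mathcal{Y}_M$ are translation invariant, $U_\ast$ can only be the unique minimizer up to translation.
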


\begin{proof}
For any $u \in \mathcal{Y}_M$, the Sz.-Nagy inequality \eqref{Nagy} implies
\begin{align}
F(u) & =\frac{1}{2}\|u_x\|_2^2-\frac{1}{m+1}\|u\|_{m+1}^{m+1}  \nonumber \\
& \ge \frac{1}{2} \frac{\|u\|_{m+1}^{\alpha+2}}{C_\ast M^\alpha}-\frac{1}{m+1}\|u\|_{m+1}^{m+1}. \label{sub}
\end{align}
Now define the auxiliary function
\begin{align}
g(x)=\frac{1}{2 C_\ast M^\alpha} x^{\alpha+2}-\frac{1}{m+1}x^{m+1}.
\end{align}
Since $0<m<3$, we have $\alpha+2>m+1$, so $g(x)$ attains its minimum at $x_*=P_\ast$. It then follows from \eqref{sub} that
\begin{align}\label{Fge}
F(u)=g\left( \|u\|_{m+1} \right) \ge \frac{1}{2 C_\ast M^\alpha} P_\ast^{\alpha+2}-\frac{1}{m+1} P_\ast^{m+1}=\frac{m-3}{3(m+1)} \|U_\ast\|_{m+1}^{m+1}=F(U_\ast).
\end{align}
By Theorem \ref{Nagyoptimi}(i), equality holds in \eqref{Fge} if and only if $u=U_\ast$. This completes the proof.
\end{proof}

Combining Propositions \ref{picture}, \ref{picture1}, and \ref{picture2}, we finish the proof of Theorem \ref{fumini}.

\section{Proof of Theorem \ref{globalblow}}\label{sec4}

This section is devoted to the proof of Theorem \ref{globalblow}.Using the characterization of the critical profile $U_\ast$ established in Theorem \ref{Nagyoptimi}, we analyze the dynamical behavior of solutions to \eqref{2} in the supercritical regime $3<m<\infty$.We establish a sharp dynamical threshold determined by the $L^{m+1}$-norm of $U_\ast$.The proof is divided into two parts:we prove finite-time blow-up in Subsection \ref{sub42} and global existence in Subsection \ref{sub41}. Specifically, initial data exceeding this threshold leads to finite-time blow-up, whereas initial data falling below the threshold ensures global existence of solutions.

\subsection{A priori estimates for $F(u_0)<F(U_\ast)$}

In this subsection, we derive a priori estimates under the energy condition $F(u_0)<F(U_\ast)$. These estimates will play a key role in the subsequent analysis of finite-time blow-up and global existence.

\begin{proposition}\label{belowabove}
Let $3<m <\infty$, $\alpha=\frac{m+3}{m}$, and let $U_\ast$ be the unique optimizer given by Theorem \ref{Nagyoptimi}(i). Assume that $u(x,t)$ is a weak entropy solution to \eqref{2} with initial data $u_0$ satisfying \eqref{initial}, \eqref{mass}, and
\begin{align}\label{Fu0}
 F(u_0)<F(U_\ast).
\end{align}
\begin{enumerate}
  \item[(i)] If
  \begin{align}\label{xiaoyum1}
  \|u_0\|_{m+1}< \|U_\ast\|_{m+1}, 
  \end{align}
  then there exists a constant $\mu_1<1$ such that the corresponding weak entropy solution $u$ satisfies
     \begin{align}\label{xiaoyum}
       \|u(\cdot,t)\|_{m+1} < \mu_1 \|U_\ast\|_{m+1}
    \end{align}
for all $t>0$.
  \item[(ii)] If
  \begin{align}\label{dayum1}
  \|u_0\|_{m+1} > \|U_\ast\|_{m+1},
  \end{align}
  then there exists a constant $\mu_2>1$ such that the corresponding weak entropy solution $u$ satisfies
\begin{align}\label{dayum}
     \|u(\cdot,t)\|_{m+1} > \mu_2 \|U_\ast\|_{m+1}
\end{align}
 for all $t>0$.
\end{enumerate}
\end{proposition}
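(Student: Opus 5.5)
This is the standard potential-well argument. Combine the Sz.-Nagy inequality with the energy inequality in Definition \ref{weakdefine}. For any $t$ in the existence interval, mass conservation \eqref{mass} together with \eqref{Nagy} gives $\|u_x(\cdot,t)\|_2^2 \ge \|u(\cdot,t)\|_{m+1}^{\alpha+2}/(C_*M^\alpha)$. Hence
\begin{align*}
F(u_0) \ge F(u(\cdot,t)) \ge g\bigl(\|u(\cdot,t)\|_{m+1}\bigr), \qquad g(y):=\frac{1}{2C_*M^\alpha}y^{\alpha+2}-\frac{1}{m+1}y^{m+1},
\end{align*}
with the same auxiliary function $g$ as in the proof of Proposition \ref{picture2}.

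For $m>3$ we have $\alpha+2<m+1$. Then $g$ is strictly increasing on $(0,y_*)$ and strictly decreasing on $(y_*,\infty)$, where $g'(y_*)=0$ gives $y_*^{m-\alpha-1}=\frac{(\alpha+2)(m+1)}{2(m+1)C_*M^\alpha}$. Comparing with \eqref{Pstar}, we get $y_*=P_*=\|U_\ast\|_{m+1}$. The computation in \eqref{Fmin} shows $g(P_*)=F(U_\ast)$, which is the maximum of $g$ on $(0,\infty)$. Since $F(u_0)<F(U_\ast)=\max g$, continuity and strict monotonicity give exactly two roots $0<y_1<P_*<y_2$ of $g(y)=F(u_0)$ (or only $y_2$ if $F(u_0)\le 0$, in which case we set $y_1=0$). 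Moreover, $g(y)\le F(u_0)$ forces $y\le y_1$ or $y\ge y_2$. Define $\mu_1:=y_1/P_*<1$ and $\mu_2:=y_2/P_*>1$; by the inequality above, the open gap $(y_1,y_2)$ is forbidden for $\|u(\cdot,t)\|_{m+1}$ at every time.

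Next, use continuity in time. By Definition \ref{weakdefine}, $u\in L^\infty(0,T;H^1)$, and the weak formulation \eqref{weak} gives weak continuity in time. Interpolation between $L^1$ and the uniform $H^1$ bound on compact time subintervals (where $\|u_x\|_2$ stays bounded by Proposition \ref{local}) then upgrades this to continuity of $t\mapsto \|u(\cdot,t)\|_{m+1}$, at least on each $[0,T']\subset[0,T_w)$. The initial value $\|u_0\|_{m+1}$ cannot lie in $[y_1,y_2]$ except at the endpoints; the endpoints are excluded because $g(\|u_0\|_{m+1})\le F(u_0)$ holds with the strictness inherited from the energy. In case (i), $\|u_0\|_{m+1}<P_*$ forces $\|u_0\|_{m+1}\le y_1$. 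Continuity and the forbidden gap keep $\|u(\cdot,t)\|_{m+1}\le y_1=\mu_1 P_*$ for all $t$. Case (ii) is symmetric and gives $\|u(\cdot,t)\|_{m+1}\ge y_2=\mu_2P_*$.

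To obtain the strict inequalities in \eqref{xiaoyum} and \eqref{dayum}, either enlarge the gap slightly by replacing $F(u_0)$ with $F(u_0)+\delta<F(U_\ast)$, which yields roots $\tilde y_1\in(y_1,P_*)$ and $\tilde y_2\in(y_2^{-},P_*)^{c}$, or note that the statement only needs some $\mu_1<1$ and $\mu_2>1$. I will simply take $\mu_1:=\tilde y_1/P_*$ and $\mu_2:=\tilde y_2/P_*$ from the perturbed level, so the strict inequalities are automatic.

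The main obstacle is the time-continuity step. The free energy inequality only gives $F(u(t))\le F(u_0)$ for weak entropy solutions, so I must verify continuity of $\|u(\cdot,t)\|_{m+1}$ rigorously. I would try it first at the level of the regularized solutions $u_\varepsilon$ of \eqref{regularized}, which are smooth and satisfy the energy dissipation, with $F(u_{0\varepsilon})\to F(u_0)$ and hence $F(u_{0\varepsilon})<F(U_\ast)$ for small $\varepsilon$. There I would run the trapping argument uniformly in $\varepsilon$ and pass to the limit using the strong $L^{m+1}_{loc}$ convergence from Lions–Aubin. The remaining task is to exclude loss of $L^{m+1}$ mass at spatial infinity in the limit, which I would handle using the second-moment bound.
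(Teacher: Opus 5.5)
Your argument is correct and is the same as the paper's. The Sz.-Nagy inequality gives $F(u(\cdot,t))\ge g(\|u(\cdot,t)\|_{m+1})$, and energy monotonicity together with the single maximum of $g$ at $P_\ast=\|U_\ast\|_{m+1}$ keeps the norm away from a neighbourhood of $P_\ast$; your roots $y_1<P_\ast<y_2$ play the role of the paper's choice $F(u_0)<\delta F(U_\ast)$.

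You also make explicit the time continuity of $\|u(\cdot,t)\|_{m+1}$, which the paper uses without comment. Note two points about it. First, distributional continuity plus the $H^1$ bound only gives continuity in $L^{m+1}_{loc}$, so a tail estimate is genuinely needed; for example, test \eqref{weak} with a cutoff $\psi_R$ and use $u^{1/2}u_{xxx}\in L^2(0,T;L^2(\R))$ together with mass conservation. Second, your claim that the endpoints are excluded is not quite right: $\|u_0\|_{m+1}$ can equal $y_1$ or $y_2$ when $u_0$ is itself a Sz.-Nagy extremal, but this does not affect the conclusion.
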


\begin{proof}
First, applying the Sz.-Nagy inequality \eqref{Nagy}, we deduce from the definition of $F(u)$ that
\begin{align}\label{06081}
F(u) & = \frac{1}{2} \| u_x \|_2^2 -\frac{1}{m+1} \|u\|_{m+1}^{m+1} \nonumber \\
  & \ge \frac{1}{2C_* M^\alpha} \|u\|_{m+1}^{\alpha+2}-\frac{1}{m+1} \|u\|_{m+1}^{m+1}.
\end{align}
Moreover, substituting the optimizer $U_\ast$ into $F(u)$ yields
\begin{align}\label{06082}
F(U_\ast)& =\frac{1}{2} \|(U_\ast)_x\|_2^2-\frac{1}{m+1} \|U_\ast\|_{m+1}^{m+1} \nonumber \\
& = \frac{1}{2C_* M^\alpha} \|U_\ast\|_{m+1}^{\alpha+2}-\frac{1}{m+1} \|U_\ast\|_{m+1}^{m+1}.
\end{align}
We define the auxiliary function
\begin{align}\label{gx}
g(x)=\frac{1}{2C_* M^\alpha} x^{\alpha+2}-\frac{1}{m+1} x^{m+1}.
\end{align}
Combining \eqref{06081} and \eqref{06082}, we obtain
\begin{align}
g \left( \|U_\ast\|_{m+1} \right) =F(U_\ast) \ge g \left( \|u\|_{m+1} \right)
\end{align}
for all $u \in L^1(\R) \cap L^{m+1}(\R)$. Now, under the condition
\begin{align}\label{414}
F\left(u_0 \right)< F\left(U_\ast \right),
\end{align}
there exists a constant $0<\delta<1$ such that
\begin{align}
F\left(u_0 \right)<\delta F\left(U_\ast \right).
\end{align}
Since $F(u)$ is non-increasing in time, it follows that
\begin{align}\label{0608starstar}
g\left( \|u \|_{m+1} \right)\le F(u) \le F(u_0) < \delta F\left(U_\ast \right)=\delta g \left( \|U_\ast\|_{m+1} \right).
\end{align}

On the other hand, since $\alpha+2<m+1$ for $m>3$, a direct computation shows that the maximum of $g(x)$ is attained at
\begin{align*}
x_*=\left( \frac{\alpha+2}{2C_* M^\alpha} \right)^{\frac{1}{m-\alpha-1}}=\|U_\ast\|_{m+1}.
\end{align*}
This implies that $g(x)$ is strictly increasing for $x<\|U_\ast\|_{m+1}$. Therefore, if
\begin{align}
\|u_0\|_{m+1}< \|U_\ast\|_{m+1},
\end{align}
then \eqref{0608starstar} ensures that there exists $\mu_1<1$ (depending on $\delta$) such that, for all $t>0$,
\begin{align}
\|u \|_{m+1}< \mu_1 \|U_\ast\|_{m+1}.
\end{align}
Conversely, $g(x)$ is strictly decreasing for $x>\|U_\ast \|_{m+1}$. Thus, if
\begin{align}
\|u_0\|_{m+1}> \|U_\ast\|_{m+1},
\end{align}
using \eqref{0608starstar} again, we conclude that there exists $\mu_2>1$ (depending on $\delta$) such that
\begin{align}\label{420}
\|u \|_{m+1}> \mu_2 \|U_\ast\|_{m+1}.
\end{align}
This completes the proof.
\end{proof}

\begin{remark}\label{Fu0FUstar}
We note that if $F(u_0)<F(U_\ast)$, the case $\|u_0\|_{m+1}=\|U_\ast\|_{m+1}$ cannot occur. Indeed, it follows directly from Proposition \ref{picture}(i) that $F(u_0) \ge F(U_\ast)$ whenever $\|u_0\|_{m+1}=\|U_\ast\|_{m+1}=P_\ast$.
\end{remark}

\subsection{Finite-time blow-up} \label{sub42}

This subsection is devoted to the finite-time blow-up part of Theorem \ref{globalblow}. We prove that solutions blow up in finite time when the initial $L^{m+1}$-norm exceeds the critical threshold. The analysis relies on the time evolution of the second moment, following the classical approach proposed in \cite{JL92}. We define the second moment as
\begin{align}
m_2(t):=\int_{\R} x^2 u(x,t),dx.
\end{align}

\begin{theorem}[Finite-time blow-up]\label{finiteblowup}
Let $3<m<\infty$, and let $u(x,t)$ denote the weak entropy solution to \eqref{2} obtained from Proposition \ref{local} on $[0,T_w)$. Under assumptions \eqref{initial}, \eqref{Fu0}, and \eqref{dayum1}, the solution to \eqref{2} satisfies
\begin{align}\label{m22t}
\frac{d}{dt} m_2(t)=6 F(u)-2 \frac{m-3}{m+1} \int_{\R} u^{m+1} dx,\quad 0<t<T_w.
\end{align}
Moreover, $T_w<\infty$, and $u(x,t)$ blows up in finite time in the sense that
\begin{align*}
\limsup_{t \to T_w}\|u_x(\cdot,t)\|_{2}=\infty.
\end{align*}
\end{theorem}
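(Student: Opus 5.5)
Our plan is the classical virial argument. We first derive the identity \eqref{m22t} and then show that its right-hand side is bounded above by a strictly negative constant for all times. The second moment is nonnegative, so it cannot decrease linearly forever, and this forces $T_w<\infty$.

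\textbf{Step 1 (virial identity).} We test the weak formulation \eqref{weak} with $\psi(x)=x^2$. Since $\psi$ is not compactly supported, we use $\psi_R(x)=x^2\varphi(x/R)$, where $\varphi$ is a smooth cutoff, and let $R\to\infty$. The tails are controlled by the finite second moment and by the bounds $u\in L^\infty(0,T;L^1\cap H^1)$ and $u\in L^2(0,T;H^2)$. On the regularized problem \eqref{regularized} we would do the same computation and pass to the limit $\varepsilon\to0$, since $m_2$ stays finite on bounded time intervals.

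With $\psi_x=2x$ and $\psi_{xx}=2$ we obtain
\[
\frac{d}{dt}m_2=2\int_{\R} x\,u\,u_{xxx}\,dx-\frac{2m}{m+1}\int_{\R}u^{m+1}\,dx .
\]
Integrating by parts gives $\int x u u_{xxx}\,dx=-\int (u+xu_x)u_{xx}\,dx=\int u_x^2\,dx-\tfrac12\int u_x^2\,dx+\int u_x^2\,dx=\tfrac32\int u_x^2\,dx$. The sign and rearrangement should be checked carefully, but the outcome is
\[
\frac{d}{dt}m_2=3\|u_x\|_2^2-\frac{2m}{m+1}\|u\|_{m+1}^{m+1}.
\]
Writing $3\|u_x\|_2^2=6F(u)+\tfrac{6}{m+1}\|u\|_{m+1}^{m+1}$ turns this into exactly \eqref{m22t}. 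Justifying the boundary terms at $R\to\infty$ and the passage $\varepsilon\to0$ is the main technical obstacle. We expect to handle it by working with the time-integrated identity, so that $L^2_tH^2_x$ suffices; this gives an integrated inequality for the weak solution.

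\textbf{Step 2 (uniform negativity).} By the entropy inequality, $F(u(t))\le F(u_0)$. By Proposition \ref{belowabove}(ii), $\|u(t)\|_{m+1}>\mu_2\|U_\ast\|_{m+1}=\mu_2P_\ast$ for all $t$. Using $F(U_\ast)=\frac{m-3}{3(m+1)}P_\ast^{m+1}$ we get
\[
\frac{d}{dt}m_2\le 6F(u_0)-\frac{2(m-3)}{m+1}\mu_2^{m+1}P_\ast^{m+1}<6F(U_\ast)-\frac{2(m-3)}{m+1}P_\ast^{m+1}=0 .
\]
The strict inequality uses $F(u_0)<F(U_\ast)$ and $\mu_2>1$. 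Hence there is $c>0$ with $\frac{d}{dt}m_2\le -c$ on $(0,T_w)$.

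\textbf{Step 3 (conclusion).} Integrating gives $0\le m_2(t)\le m_2(0)-ct$, which is impossible for $t>m_2(0)/c$. Therefore $T_w\le m_2(0)/c<\infty$. Proposition \ref{local} then gives $\|u_x(\cdot,t)\|_2\to\infty$ as $t\to T_w$, and in particular the stated $\limsup$ is infinite.
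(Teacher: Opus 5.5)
Your proposal is correct and follows the paper's proof. Both derive the virial identity by testing with $x^2$, which the paper also justifies only formally, as in your Step 1. Both then get uniform negativity of $\frac{d}{dt}m_2$ from $F(u)\le F(u_0)<F(U_\ast)$ and Proposition \ref{belowabove}(ii).

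The only difference is the last step. The paper asserts that $m_2(t)\to 0$ at some finite $T$, deduces $\|u\|_{m+1}\to\infty$ from $M\le C\|u\|_{m+1}^{\frac{2(m+1)}{3m+2}}m_2^{\frac{m}{3m+2}}$, and then gets $\|u_x\|_2\to\infty$ from the Sz.-Nagy inequality. You instead bound $T_w\le m_2(0)/c$ by contradiction and invoke the continuation criterion of Proposition \ref{local}. Your ending is cleaner, since it does not presuppose that the solution survives until $m_2$ vanishes.
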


\begin{proof}
We present the formal computation; the passage to the limit from the regularized problem \eqref{regularized} can be justified as in the proof of \cite[Theorem~3]{LW17DC} without additional difficulty. The key ingredient is the time evolution of the second moment. Integrating by parts in \eqref{2}, we compute
\begin{align}
\frac{d}{dt} m_2(t)
&= 2 \int_{\R} x u u_{xxx} dx+\frac{2m}{m+1} \int_{\R} x (u^{m+1})_x dx \nonumber \\
&=-2 \int_{\R} u u_{xx} dx-2\int_{\R} x u_x u_{xx} dx-\frac{2m}{m+1} \int_{\R} u^{m+1} dx \nonumber \\
&=2 \int_{\R} |u_x|^2 dx+ \int_{\R} |u_x|^2 dx  -\frac{2m}{m+1} \int_{\R} u^{m+1} dx \nonumber \\
&=3 \int_{\R} |u_x|^2 dx-\frac{2m}{m+1} \int_{\R} u^{m+1} dx \nonumber \\
&=6 F(u)-2 \frac{m-3}{m+1} \int_{\R} u^{m+1} dx. \label{m222t}
\end{align}
Since $m>3$, we use \eqref{m222t} together with \eqref{Fu0} and estimate \eqref{dayum} from Proposition \ref{belowabove} (with some $\mu_2>1$) to obtain
\begin{align}\label{m2t}
\frac{d}{dt} m_2(t)
& < 6 F(u_0)-2 \mu_2 \frac{m-3}{m+1} \int_{\R} U_\ast^{m+1} dx \nonumber \\
&<6 F(U_\ast)-2 \mu_2 \frac{m-3}{m+1} \int_{\R} U_\ast^{m+1} dx \nonumber \\
&=2(1-\mu_2)\frac{m-3}{m+1} \|U_\ast\|_{m+1}^{m+1}<0.
\end{align}
Here we have used that $F(u)$ is non-increasing in time and Corollary \ref{Fus}. It follows from \eqref{m2t} that there exists $T>0$ such that
\begin{align*}
\lim_{t \to T} m_2(t)=0.
\end{align*}

Next, by H\"{o}lder's inequality,
\begin{align}
\int_{\R} u(x) dx
&\le \int_{|x| \le R} u(x) dx+\int_{|x|>R} u(x) dx \nonumber \\
&\le C R^{\frac{m}{m+1}} \|u\|_{m+1} +\frac{1}{R^2} m_2(t).
\end{align}
Choosing $
R=\left( \frac{C m_2(t)}{\|u\|_{m+1}} \right)^{\frac{m+1}{3m+2}}
$ yields
\begin{align}
M \le C \|u\|_{m+1}^{\frac{2(m+1)}{3m+2}} m_2(t)^{\frac{m}{3m+2}},
\end{align}
which implies
\begin{align*}
\limsup_{t \to T} \|u(\cdot,t)\|_{m+1}
\ge \lim_{t \to T} M^{\frac{3m+2}{2(m+1)}} m_2(t)^{-\frac{m}{2(m+1)}}=\infty.
\end{align*}
Using the Sz.-Nagy inequality \eqref{Nagy}, we conclude that
\begin{align*}
\limsup_{t \to T} \|u_x \|_2=\infty.
\end{align*}
This completes the proof.
\end{proof}

\subsection{Global existence} \label{sub41}

This subsection is dedicated to the global existence part of Theorem \ref{globalblow}. We prove that solutions exist globally in time when the initial $L^{m+1}$-norm lies below the critical threshold associated with $U_\ast$.

\begin{theorem}[Global existence]\label{globalexistence}
Let $3<m<\infty$. Assume that \eqref{initial}, \eqref{Fu0}, and \eqref{xiaoyum1} hold. Then the weak solution obtained in Proposition \ref{local} exists globally on $[0,\infty)$ and satisfies
\begin{align}
\|u(\cdot,t)\|_{m+1}+\|u_x (\cdot,t)\|_2 \le C\bigl(\|u_0\|_1,\|(u_0)_x\|_{2} \bigr),\quad 0<t<\infty
\end{align}
and
\begin{align}
\int_0^t \int_{\R} u \left|u_{xxx} + (u^m)_x\right|^2 dxds \le C\bigl(\|u_0\|_1,\|(u_0)_x\|_{2} \bigr),\quad 0<t<\infty.
\end{align}
Furthermore,
\begin{align}\label{m2blow}
\lim_{t \to \infty} m_2(t)=\infty.
\end{align}
\end{theorem}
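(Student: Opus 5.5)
The plan has three steps: uniform bounds from the potential-well estimate, global existence from the blow-up criterion, and divergence of the second moment from the virial identity.

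\textbf{Step 1: uniform bounds.} By Proposition \ref{belowabove}(i), the assumptions \eqref{Fu0} and \eqref{xiaoyum1} give
\begin{align*}
\|u(\cdot,t)\|_{m+1}<\mu_1\|U_\ast\|_{m+1}
\end{align*}
for every $t$ in the existence interval $[0,T_w)$. The energy inequality of the weak entropy solution then yields
\begin{align*}
\frac12\|u_x(\cdot,t)\|_2^2\le F(u_0)+\frac{1}{m+1}\|u(\cdot,t)\|_{m+1}^{m+1}\le F(u_0)+\frac{\mu_1^{m+1}}{m+1}P_\ast^{m+1}.
\end{align*}
Here $P_\ast$ depends only on $M$. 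Also, $F(u_0)$ is controlled by $\|(u_0)_x\|_2$ and, via the Sz.-Nagy inequality \eqref{Nagy}, by $\|u_0\|_1$. This gives the stated uniform bound on $\|u\|_{m+1}+\|u_x\|_2$.

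For the dissipation integral, I use the same energy inequality together with the lower bound $F(u(\cdot,t))\ge -\frac{1}{m+1}\mu_1^{m+1}P_\ast^{m+1}$. This bounds
\begin{align*}
\int_0^t\int_{\R}u\,|u_{xxx}+(u^m)_x|^2\,dx\,ds
\end{align*}
uniformly in $t$.

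\textbf{Step 2: global existence.} Since $\|u_x(\cdot,t)\|_2$ stays bounded on $[0,T_w)$, the blow-up criterion of Proposition \ref{local} rules out $T_w<\infty$. Hence $T_w=\infty$.

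\textbf{Step 3: the second moment diverges.} I use the virial identity \eqref{m22t}, whose derivation in Theorem \ref{finiteblowup} does not depend on the sign assumptions:
\begin{align*}
\frac{d}{dt}m_2(t)=3\|u_x\|_2^2-\frac{2m}{m+1}\|u\|_{m+1}^{m+1}.
\end{align*}
The aim is a strictly positive lower bound. By Sz.-Nagy, $\|u_x\|_2^2\ge \|u\|_{m+1}^{\alpha+2}/(C_\ast M^\alpha)$. Writing $y=\|u\|_{m+1}$, we get
\begin{align*}
m_2'\ge \frac{3}{C_\ast M^\alpha}y^{\alpha+2}-\frac{2m}{m+1}y^{m+1}=y^{\alpha+2}\Big(\frac{3}{C_\ast M^\alpha}-\frac{2m}{m+1}y^{m-\alpha-1}\Big).
\end{align*}
By the definition \eqref{Pstar} of $P_\ast$, the bracket vanishes exactly at $y=P_\ast$. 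Since $y<\mu_1P_\ast$, the bracket is bounded below by a constant $c_0>0$, so $m_2'\ge c_0\,y^{\alpha+2}$.

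This bound alone is not uniform, because $y$ could tend to $0$. I therefore split $m_2'=6F(u)-2\frac{m-3}{m+1}y^{m+1}$ differently and combine the two bounds. The alternative is to argue by contradiction: suppose $m_2$ stays bounded along a sequence. Mass conservation, the bounded second moment and the Hölder tail estimate from Theorem \ref{finiteblowup} give $M\le C y^{\frac{2(m+1)}{3m+2}}m_2^{\frac{m}{3m+2}}$. So a bound on $m_2$ forces a lower bound on $y$. Then $m_2'\ge c_0y^{\alpha+2}\ge c_1>0$, and $m_2$ grows at least linearly, a contradiction.

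Made rigorous: $m_2$ is nondecreasing, since $m_2'\ge0$. If $m_2(t)\le K$ for all $t$, then $y\ge c(K,M)>0$ for all $t$, so $m_2'\ge c_1$ and $m_2\to\infty$, a contradiction. Hence $m_2$ is unbounded, and by monotonicity $\lim_{t\to\infty}m_2(t)=\infty$.

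\textbf{Main obstacle.} The hard part is justifying the second-moment identity and these inequalities at the level of weak solutions. I would derive them for the regularized problem \eqref{regularized} and pass to the limit, as in the finite-time blow-up proof, keeping track of the $\sqrt{u^2+\varepsilon^2}$ correction terms.
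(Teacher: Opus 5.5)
Your proof is correct, and for the divergence of $m_2$ it takes a genuinely different route from the paper.

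\emph{The paper's route.} It argues by contradiction via compactness. If $m_2(t_k)$ stays bounded along some $t_k\to\infty$, tightness and the uniform $H^1$ bound give $u(\cdot,t_k)\to u_\infty$ in $L^1\cap L^{m+1}$ with $\|u_\infty\|_1=M$. Time-integrability of the dissipation is then used to assert that $u_\infty$ is a steady state. Proposition \ref{picture}(ii) gives $F(u_\infty)\ge F(U_\ast)$, which contradicts $F(u_\infty)\le\liminf_k F(u(\cdot,t_k))\le F(u_0)<F(U_\ast)$.

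\emph{Your route.} You never construct a limit profile. The virial identity plus Sz.-Nagy gives $m_2'\ge c_0\|u\|_{m+1}^{\alpha+2}$; the bracket does vanish exactly at $P_\ast$, because $\alpha+2=3(m+1)/m$. The tail estimate then turns a bound on $m_2$ into a lower bound on $\|u\|_{m+1}$. This buys several things. It avoids the steps the paper only sketches: that vanishing integrated dissipation makes the limit at the particular times $t_k$ stationary, and that $u_\infty$ satisfies the Pohozaev identity so that Proposition \ref{picture}(ii) applies. It shows $m_2$ is nondecreasing. Chaining your two inequalities also gives $m_2'\ge c\,m_2^{-3/2}$, i.e.\ $m_2(t)\ge (m_2(0)^{5/2}+ct)^{2/5}$, a quantitative rate matching the $t^{1/5}$ spreading scale.

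\emph{The price.} You need the virial identity for the weak solution itself, in the lower-bound direction $m_2(t)-m_2(s)\ge\int_s^t(\cdots)\,d\tau$. Weak lower semicontinuity under $\varepsilon\to0$ only gives $m_2(t)\le\liminf_\varepsilon m_2^\varepsilon(t)$. So this direction is best derived from \eqref{weak} with cut-offs of $x^2$, using $u\in L^2(0,T;H^2)$ and $u^{1/2}u_{xxx}\in L^2$. The paper also uses this identity only formally (Theorem \ref{finiteblowup} and the bound $m_2(t)\le m_2(0)+6F(u_0)t$), so you are not below its level of rigor. What its compactness route buys in exchange is a direct link between spreading and the variational fact that no steady state of mass $M$ lies below $F(U_\ast)$.

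\emph{Minor points.} Your Step 2, closing with the continuation criterion of Proposition \ref{local}, is a streamlined version of the paper's re-running of the construction on each $[0,T]$. The sentence in Step 3 about splitting $m_2'$ differently can simply be dropped, since the contradiction argument stands on its own.
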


\begin{proof}
The proof consists of two parts. Step 1 establishes the global existence of weak solutions. Step 2 proves that the second moment tends to infinity, thereby verifying \eqref{m2blow}.

\textbf{Step 1} (Global existence). Under assumption \eqref{xiaoyum1}, it follows from \eqref{xiaoyum} in Proposition \ref{belowabove} that for any $t>0$, there exists $\mu_1<1$ such that
\begin{align}\label{260109}
\|u(\cdot,t)\|_{m+1}<\mu_1 \|U_\ast\|_{m+1}.
\end{align}
Furthermore, since the free energy $F(u)$ is non-increasing in time, we have for any $t>0$:
\begin{align}\label{260110}
\frac{1}{2} \|u_x \|_2^2  \le \frac{1}{m+1} \|u(\cdot,t)\|_{m+1}^{m+1}+ F(u_0) \le C\bigl(\|U_\ast\|_{m+1},F(u_0)\bigr),
\end{align}
and
\begin{align}
\int_0^t \int_{\R} u \left|u_{xxx} + (u^m)_x \right|^2 dxds \le F(u_0)+\frac{1}{m+1} \|u(\cdot,t)\|_{m+1}^{m+1} \le C\bigl(\|U_\ast\|_{m+1},F(u_0)\bigr).
\end{align}
Combining the Sobolev embedding theorem with H\"{o}lder's inequality for $1<r<\infty$, we infer from \eqref{260110} that
\begin{align}
\| u\|_{L^r(\R)} \le C, \quad \text{for any } 1 \le r \le \infty.
\end{align}
Therefore, for any $T>0$, using the arguments from Lemma~2 in \cite{LW17DC} in $H^2(\R)$, the following uniform regularity estimates hold:
\begin{align}
&u \in L^\infty(0,T;L^1 \cap H^1 (\R)), \label{11} \\
&u \in L^\infty(0,T;L^{r}(\R)) \quad \text{for any } 1 \le r \le \infty, \label{22} \\
& u \in L^2(0,T;H^2(\R)), \label{33} \\
&u^{1/2} u_{xxx} \in L^2(0,T;L^2(\R)), \nonumber \\
&u^{1/2} (u^m)_x \in L^2(0,T;L^2(\R)). \nonumber
\end{align}
In addition, the second moment satisfies
\begin{align}\label{m2bdd}
m_2(t) \le m_2(0)+6 F(u_0)t, \quad \text{for any } 0<t<T.
\end{align}
Thus, uniform bounds for the terms in the free energy have been established. Following the global existence argument in \cite{LW17DC}, we conclude for $3<m<\infty$ that there exists a weak entropy solution to \eqref{2} on $[0,T]$ for any $0<T<\infty$.

\textbf{Step 2} (Proof of \eqref{m2blow}). Suppose, for contradiction, that there exists an increasing sequence of times $\{t_k\}_{k \in \mathbb{N}} \to \infty$ such that
\begin{align}
m_2(t_k)=\int_{\R} x^2 u(x,t_k) dx
\end{align}
is bounded. Since $u \in L^1(\R) \cap H^1(\R)$, we claim that there exist a subsequence (still denoted $t_k$) and a function $u_\infty$ such that as $k \to \infty$,
\begin{align}
& u(\cdot,t_k) \rightharpoonup u_\infty \text{ in } H^1(\R), \nonumber \\
& u(\cdot,t_k) \to u_\infty \text{ in } L^1(\R), \label{L1} \\
& u(\cdot,t_k) \to u_\infty \text{ in } L^{m+1}(\R) \label{Lm1}
\end{align}
with $\|u_\infty\|_1=M$. Moreover, the second moment of the limit satisfies
\begin{align}
0<\int_{\R} x^2 u_\infty dx \le \liminf_{k \to \infty}\int_{\R} x^2 u(\cdot, t_k) dx<\infty,
\end{align}
since concentration at a Dirac delta function at the origin is excluded by the uniform bound \eqref{11}.

On the other hand, a direct consequence of \eqref{260109} is that the free energy $F(u)$ is bounded from below:
\begin{align}
F(u_0)-\liminf_{k \to \infty} F(u)(t_k) =\lim_{k \to \infty} \int_0^{t_k} \left( \int_{\R} u(x,s) \left| u_{xxx}+(u^m)_x \right|^2 dx \right) ds.
\end{align}
It follows that the Fisher information is integrable, and
\begin{align}
\lim_{k \to \infty} \int_{t_k}^\infty \left( \int_{\R} u(x,s) \left| u_{xxx}+(u^m)_x \right|^2 dx \right) ds=0,
\end{align}
which implies that, up to a subsequence, the limit $u_\infty$ as $k \to \infty$ satisfies
\begin{align}\label{260123}
\bigl((u_\infty)_{xx}+u_\infty^m\bigr)_x=0 \quad \text{a.e. in } \operatorname{supp}(u_\infty).
\end{align}
This follows by arguments similar to those in the existence proof in Section~2 of \cite{BCM08}. Therefore, $u_\infty$ is a radially symmetric steady-state solution to \eqref{2} with mass $M$. By Proposition \ref{picture}(ii),
\begin{align}\label{Usinfty}
F(U_\ast) \le F(u_\infty).
\end{align}
However, \eqref{Lm1} yields
\begin{align}
F(u_\infty) & =\frac{1}{2} \|(u_\infty)_x \|_{2}^2-\frac{1}{m+1} \|u_\infty\|_{m+1}^{m+1} \nonumber \\
& \le \liminf_{k \to \infty} \Bigl( \frac{1}{2} \|u_x(\cdot, t_k)\|_{2}^2-\frac{1}{m+1} \|u(\cdot,t_k)\|_{m+1}^{m+1} \Bigr) \nonumber \\
& = \liminf_{k \to \infty} F(u(\cdot,t_k)) \le F(u_0)<F(U_\ast),
\end{align}
contradicting \eqref{Usinfty}. Hence, \eqref{m2blow} holds.

It remains to justify the convergences \eqref{L1} and \eqref{Lm1} used above. We first prove \eqref{L1}. Let $0 \le \chi_R \le 1$ be a function in $C_0^\infty(\R)$ such that $\chi_R=1$ on $B_R(0)$ and $\chi_R=0$ on $\R\setminus B_{2R}(0)$. We decompose
\begin{align}
M=\int_{\R} u(\cdot,t_k) dx=\int_{\R} u(\cdot,t_k) \chi_R dx+\int_{\R} u(\cdot,t_k) (1-\chi_R) dx.
\end{align}
Then for sufficiently large $R$,
\begin{align}\label{largeR}
\int_{\R} u(\cdot,t_k) (1-\chi_R) dx \le \int_{|x| \ge R} u(\cdot, t_k) dx \le \frac{1}{R^2} \int_{|x| \ge R} |x|^2 u(\cdot,t_k) dx<\varepsilon,
\end{align}
while
\begin{align*}
\lim_{k \to \infty} \int_{\R} u(\cdot,t_k) \chi_R dx=\int_{\R} u_\infty \chi_R dx.
\end{align*}
Hence,
\begin{align}
\Bigl| M-\lim_{k \to \infty} \int_{\R} u(\cdot,t_k) \chi_R dx \Bigr|<\varepsilon \quad \text{as } R \to \infty,
\end{align}
which implies
\begin{align}\label{massconservation}
\int_{\R} u_\infty dx=M>0.
\end{align}
This proves \eqref{L1}.

We next prove \eqref{Lm1}. Using the condition $1<m+1<\infty$ and the compact Sobolev embedding $H^1(B_R(0)) \hookrightarrow L^q(B_R(0))$ for $1 \le q<\infty$, we obtain local strong convergence in $L^{m+1}(B_R(0))$ for any fixed $R>0$. Moreover, it follows from the Sz.-Nagy inequality \eqref{Nagy} that
\begin{align}
\int_{|x| \ge R} u^{m+1} dx
\le C_* \left( \int_{|x| \ge R} u dx  \right)^{\frac{3+m}{3}} \| u_x \|_{2}^{\frac{2}{3}m}
\le C\bigl(M,C_*,F(u_0)\bigr) \left( \frac{m_2(t)}{R^2} \right)^{\frac{3+m}{3}}
\end{align}
upon using \eqref{largeR}. Therefore, as $R \to \infty$ and $k \to \infty$,
\begin{align*}
&\int_{\R} \left|u(\cdot,t_k)-u_\infty \right|^{m+1} dx
=\int_{|x| < R} \left|u(\cdot,t_k)-u_\infty \right|^{m+1} dx+\int_{|x| \ge R} \left|u(\cdot,t_k)-u_\infty \right|^{m+1} dx \\
\le &\int_{|x| < R} \left|u(\cdot,t_k)-u_\infty \right|^{m+1} dx+C \int_{|x| \ge R} \bigl(|u(\cdot,t_k)|^{m+1}+|u_\infty|^{m+1}\bigr) dx \to 0.
\end{align*}
This completes the proof of Theorem \ref{globalexistence}.
\end{proof}

\subsection{Possible dynamic behaviors for $F(u_0)>F(U_\ast)$}\label{FU0great}

For initial data satisfying $F(u_0) > F(U_\ast)$, energy dissipation no longer prevents solutions from approaching the steady state $U_\ast$. The long-time behavior of global solutions is conjectured to depend on the boundedness of two key quantities:
\begin{align*}
&\textbf{(P1) } \sup_{t \ge 0} \| u(\cdot,t)\|_{m+1} < \infty, \quad \text{(concentration control)} \\
&\textbf{(P2) } \sup_{t \ge 0} \int_{\R} x^2 u(x,t)\,dx < \infty. \quad \text{(mass confinement)}
\end{align*}
Based on the gradient-flow structure and analogy with related models, we expect the following scenarios:
\begin{itemize}
    \item \textbf{(P1) and (P2):} The solution is expected to converge (possibly up to translation) to $U_\ast$.
    \item \textbf{(P1) and not (P2):} Mass is expected to escape to infinity while the profile remains bounded in $L^{m+1}$. After a suitable time-dependent translation that follows the escaping core, the profile may converge locally to a steady state.
    \item \textbf{not (P1) and (P2):} Unbounded concentration growth within a confined region suggests the possibility of infinite-time singularity formation as a Dirac mass, with the free energy tending to $-\infty$.
    \item \textbf{not (P1) and not (P2):} The solution may exhibit complex dynamics involving simultaneous concentration growth and mass dispersion, possibly leading to intricate asymptotic patterns.
\end{itemize}

\begin{remark}
With the exception of the first case, the precise asymptotic behavior in the other scenarios depends on further details of the model and the initial data. A rigorous analysis of these cases remains an open challenge.
\end{remark}

\subsection{Similarity solutions}\label{sec5}

We now use similarity solutions to describe the dominant dynamics of \eqref{2}. Consider rescaling the length, height, and time scales of the solution according to
\begin{align}
 x=L \hat{x},\quad t=T \hat{t},\quad u(x,t)=H \hat{u}(\hat{x},\hat{t}).
\end{align}
Substituting this change of variables into \eqref{2} yields
\begin{align}
\frac{H}{T}\frac{\partial \hat{u}}{\partial \hat{t}}=-\frac{H^2}{L^4}\frac{\partial}{\partial \hat{x}} \left( \hat{u} \frac{\partial^3 \hat{u}}{\partial \hat{x}} \right)- m \frac{H^{m+1}}{L^2} \frac{\partial}{\partial \hat{x}} \left( \hat{u}^m\frac{\partial \hat{u}}{\partial \hat{x}} \right).
\end{align}
Balancing the coefficients of the second- and fourth-order spatial terms shows that the film thickness scale is inversely proportional to the horizontal length scale: $H=L^{-\frac{2}{m-1}}$. Furthermore, balancing the time derivative term to obtain the distinguished limit gives the relation between the length scale and the time scale:
\begin{align}
L=T^{\frac{m-1}{4m-2}},\quad H=T^{-\frac{1}{2m-1}}.
\end{align}
These scalings produce two classes of similarity solutions:
\begin{alignat*}{2}
&\text{(i) Infinite-time spreading solutions:} \quad && H \to 0 \text{ and } L \to \infty \text{ as } T \to \infty, \\
&\text{(ii) Finite-time blow-up solutions:} \quad && H \to \infty \text{ and } L \to 0 \text{ as } T \to 0.
\end{alignat*}

\section{Conclusions}\label{sec6}

In this paper, we have investigated the dynamical behavior of a one-dimensional thin-film equation in the supercritical regime $3<m<\infty$, where the competition between fourth-order repulsive diffusion and aggregative nonlinearity leads to rich threshold phenomena. A key discovery of the present work is that the extremal function of the sharp Sz.-Nagy inequality, under appropriate mass and norm constraints, serves as the global minimizer of the free energy functional. This variational identity provides a natural and rigorous foundation for determining critical initial data.

By exploiting this variational characterization, we establish a sharp and optimal threshold criterion that classifies solutions according to their initial $L^{m+1}$-norm relative to the critical profile. The result reveals a complete dichotomy: initial data with sufficiently high concentration leads to finite-time blow-up, while data below the threshold yields global existence accompanied by infinite-time spatial spreading. Our threshold significantly extends classical results by allowing positive initial free energy, thereby covering a far broader class of initial configurations.

These findings highlight the crucial role of variational structures and sharp functional inequalities in understanding threshold dynamics for degenerate higher-order diffusion equations. Future directions include the rigorous analysis of long-time behaviors for solutions above the critical energy, the extension of the present variational framework to higher dimensions, and the investigation of threshold phenomena in related gradient-flow models with competing nonlinear effects.

\section*{Data Availability Statement}
All data generated or analysed during this study are included in this published article (and its supplementary information files). All data that support the findings of this study are included within the article (and any supplementary files).

\section*{Statements and Declarations}
This research did not receive any specific grant from funding agencies in the public, commercial, or not-for-profit sectors. The author has no competing interests to declare that are relevant to the content of this article.


\begin{thebibliography}{00}

\bibitem{Tom1} R. Almgren, A. L. Bertozzi, M. P. Brenner, {\it Stable and unstable singularities in the unforced Hele-Shaw cell}, Phys. Fluids, {\bf 8}(6) (1996), 1356-1370.

\bibitem{BP00} A. L. Bertozzi, M. Pugh, {\it Finite-time blow-up of solutions of some long-wave unstable thin film equations}, Indiana Univ. Math. J., {\bf 49}(4) (2000), 1323-1366.

\bibitem{BP98} A. L. Bertozzi, M. Pugh, {\it Long-wave instabilities and saturation in thin film equations}, Comm. Pure Appl. Math., {\bf 51}(6) (1998), 625-661.


\bibitem{BL13} S. Bian, J.-G. Liu, {\it Dynamic and steady states for multi-dimensional Keller-Segel model with diffusion exponent $m > 0$}, Comm. Math. Phys., {\bf 323} (2013), 1017-1070.


\bibitem{BCL09} A. Blanchet, J. A. Carrillo, P. Lauren\c{c}ot, {\it Critical mass for a Patlak-Keller-Segel model with degenerate diffusion in higher dimensions}, Calc. Var. Partial Differential Equations, {\bf 35} (2009), 133-168.

\bibitem{BCM08} A. Blanchet, J. A. Carrillo, N. Masmoudi, {\it Infinite time aggregation for the critical Patlak-Keller-Segel model in $\R^2$}, Comm. Pure Appl. Math., {\bf 61} (2008), 1449-1481.

\bibitem{BCKS99} M. P. Brenner, P. Constantin, L. P. Kadanoff, A. Schenkel, S. C. Venkataramani, {\it Diffusion, attraction and collapse}, Nonlinearity, {\bf 12} (1999), 1071-1098.


\bibitem{B02} C. J. Budd, {\it Asymptotics of multibump blow-up self-similar solutions of the nonlinear Schr\"odinger equation}, SIAM J. Appl. Math., {\bf 62}(3) (2002), 801-830.


\bibitem{C03} T. Cazenave, {\it Semilinear Schr\"odinger equations}, Courant Lecture Notes in Math., vol. 10, New York Univ., Courant Inst. Math. Sci., New York; Amer. Math. Soc., Providence, RI, 2003.

\bibitem{C67} S. Chandrasekhar, {\it An Introduction to the Study of Stellar Structure}, Dover, New York, 1967.

%\bibitem{C84} C. V. Coffman, {\it A nonlinear boundary value problem with many positive solutions}, J. Differential Equations, {\bf 54}(3) (1984), 429-437.

\bibitem{Tom23} P. Constantin, T. F. Dupont, R. E. Goldstein, L. P. Kadanoff, M. J. Shelley, S.-M. Zhou, {\it Droplet breakup in a model of the Hele-Shaw cell}, Phys. Rev. E, {\bf 47}(6) (1993), 4169-4181.


\bibitem{Tom25} T. F. Dupont, R. E. Goldstein, L. P. Kadanoff and S.-M. Zhou, {\it Finite–time singularity
formation in Hele Shaw systems}, Phys. Rev. E, {\bf 47}(6) (1993), 4182-4196.

\bibitem{Tom26} P. Ehrhard, {\it The spreading of hanging drops}, J. Colloid Interface Sci., {\bf 168}(1) (1994), 242-246.

\bibitem{Jose37} J. Evans, V. Galaktionov and J. King, {\it Blow-up similarity solutions of the fourth-order unstable thin film equation},
European J. Appl. Math. {\bf 18}(2) (2007), 195-231.

\bibitem{Jose38} J. Evans, V. Galaktionov and J. King, {\it Source-type solutions of the fourth-order unstable thin film equation},
European J. Appl. Math. {\bf 18}(3) (2007), 273-321.


\bibitem{F01} G. Fibich, {\it Self-focusing in the damped nonlinear Schr\"odinger equation}, SIAM J. Appl. Math., {\bf 61}(5) (2001), 1680-1705.


\bibitem{Tom36} R. E. Goldstein, A. I. Pesci, M. J. Shelley, {\it Topology transitions and singularities in viscous flows}, Phys. Rev. Lett., {\bf 70}(20) (1993), 3043-3046.

\bibitem{JL92} W. J\"{a}ger, S. Luckhaus, {\it On explosions of solutions to a system of partial differential equations modeling chemotaxis,} Trans. Amer. Math. Soc., {\bf 239}(2) (1992), 819-821.


\bibitem{KS70} E. F. Keller, L. A. Segel, {\it Initiation of slime mold aggregation viewed as an instability}, J. Theoret. Biol., {\bf 26} (1970), 399-415.

\bibitem{Tom44} R. S. Laugesen, M. C. Pugh, {\it Energy levels of steady states for thin-film-type equations}, J. Differential Equations, {\bf 182}(2) (2002), 377-415.

\bibitem{Tom42} R. S. Laugesen, M. C. Pugh, {\it Linear stability of steady states for thin film and Cahn-Hilliard type equations}, Arch. Ration. Mech. Anal., {\bf 154}(1) (2000), 3-51.

\bibitem{Tom43} R. S. Laugesen, M. C. Pugh, {\it Properties of steady states for thin film equations}, European J. Appl. Math., {\bf 11}(3) (2000), 293-351.

\bibitem{lieb202} E. H. Lieb, M. Loss, {\it Analysis}, Grad. Stud. Math., vol. 14, Amer. Math. Soc., Providence, RI, 2nd ed., 2001.

    
\bibitem{LW17DC} J.-G. Liu, J. Wang, {\it Global existence for a thin film equation with subcritical mass}, Discrete and Continuous Dynamical systems-Series B, {\bf 22} (2017), 1461-1492.

\bibitem{M89} F. Merle, {\it Limit of the solution of a nonlinear Schr\"odinger equation at blow-up time}, J. Funct. Anal., {\bf 84}(1) (1989), 201-214.


\bibitem{Sz.-Nagy41} B. Sz.-Nagy, {\it \"Uber Integralungleichungen zwischen einer Funktion und ihrer Ableitung}, Acta Univ. Szeged. Sect. Sci. Math., {\bf 10} (1941), 64-74.

\bibitem{P07} B. Perthame, {\it Transport Equations in Biology}, Birkh\"auser, Basel, 2007.


\bibitem{PS98} P. Pucci, J. Serrin, {\it Uniqueness of ground states for quasi-linear elliptic operators}, Indiana Univ. Math. J., {\bf 47} (1998), 501-528.

\bibitem{Jose59} D. Slepčev, {\it Linear stability of self similar solutions of unstable thin-film equations}, Interfaces Free Bound. {\bf 11}(3) 
(2009), 375-398.

\bibitem{Jose60} D. Slepčev, M. C. Pugh, {\it Self similar blow up of unstable thin-film equations}, Indiana Univ. Math. J., {\bf 54}(6) (2005), 1697-1738.

\bibitem{SS99} C. Sulem, P.-L. Sulem, {\it The nonlinear Schr\"odinger equation}, Appl. Math. Sci., vol. 139, Springer, New York, 1999.


\bibitem{W83} M. I. Weinstein, {\it Nonlinear Schr\"odinger equations and sharp interpolation estimates}, Comm. Math. Phys., {\bf 87}(4) (1983), 567-576.

\bibitem{Tom62} M. B. Williams, S. H. Davis, {\it Nonlinear theory of film rupture}, J. Colloid Interface Sci., {\bf 90}(1) (1982), 220-228.

\bibitem{Tom65} T. P. Witelski, A. J. Bernoff, {\it Stability of self-similar solutions for van der Waals
driven thin film rupture}, Phys. Fluids, {\bf 11}(9) (2000), 2443-2445. 

\bibitem{WBB04} T. P. Witelski, A. J. Bernoff, A. L. Bertozzi, {\it Blow up and dissipation in critical-case unstable thin film equation}, European J. Appl. Math., {\bf 15} (2004), 223-256.    
    
\bibitem{Tom67} W. W. Zhang, J. R. Lister, {\it Similarity solutions for van der Waals rupture of a thin
film on a solid substrate}, Phys. Fluids, {\bf 11}(9), (1999), 2454-2462.    
\end{thebibliography}
\end{document}